\newtheorem{theorem}{Theorem}[section]
\newtheorem{corollary}[theorem]{Corollary}
\newtheorem{lemma}[theorem]{Lemma}
\newtheorem{proposition}[theorem]{Proposition}
\newtheorem{conjecture}[theorem]{Conjecture}
\newtheorem{conventions}[theorem]{Conventions}
\theoremstyle{definition}
\theoremstyle{remark}
\newtheorem{remark}[theorem]{Remark}
\newtheorem{diagram}{Diagram}
\numberwithin{equation}{section}
\newcommand{\Q}{\mathbb{Q}}
\newcommand{\Z}{\mathbb{Z}}
\newcommand{\F}{\mathbb{F}}
\newcommand{\ds}{\displaystyle}
\newcommand{\ov}{\overline}
\newcommand{\wt}{\widetilde}
\newcommand{\wh}{\widehat} 
\newcommand{\sst}\scriptstyle
\newcommand{\ft}{\footnotesize}
\newcommand{\ns}\normalsize
\newcommand{\Tk}{{\bf \Sigma}_k}
\newcommand{\TK}{{\bf \Sigma}_K}
\newcommand{\tk}{{\bf T}_k}
\newcommand{\cl}{c\hskip-1pt{\ell}}
\newcommand{\order}{\raise0.8pt \hbox{${\sst \#}$}}
\newcommand{\lien}{\mathrel{\mkern-4mu}}
\newcommand{\too}{\relbar\lien\rightarrow}
\newcommand{\tooo}{\relbar\lien\relbar\lien\too}
\newcommand{\plus}{\ds\mathop{\raise 2.0pt \hbox{$\bigoplus$}}\limits}
\newcommand{\prd}{\ds\mathop{\raise 2.0pt \hbox{$\prod$}}\limits}
\newcommand{\sm}{\ds\mathop{\raise 2.0pt \hbox{$\sum$}}\limits}
\newcommand{\ffrac}[2]{\hbox{\ft $\displaystyle\frac{#1}{#2}$}}
\newcommand{\Gal}{{\rm Gal}}
\newcommand{\No}{{\rm N}}
\newcommand{\pr}{{\rm pr}}
\newcommand{\nr}{{\rm nr}}
\newcommand{\ram}{{\rm ram}}
\newcommand{\gen}{{\rm gen}}
\newcommand{\ab}{{\rm ab}}
\newcommand{\bp}{{\rm bp}}
\newcommand{\ta}{{\rm ta}}
\newcommand{\ppl}{{\rm ppl}}
\newcommand{\tor}{{\rm tor}}
\newcommand{\Hom}{{\rm H}}
\newcommand{\Ker}{{\rm Ker}}
\newcommand{\Cha}{{\rm III}}
\begin{document}

\title[Algorithmic complexity of Greenberg's conjecture]
{Algorithmic complexity \\ of Greenberg's conjecture}

\author{Georges Gras}

\address{Chemin de Ch\^ateau Gagni\`ere, Villa la Gardette,
38520 Le Bourg d'Oisans}
\email{g.mn.gras@wanadoo.fr}

\subjclass{11R23, 11R29, 11R37, 11Y40}

\keywords{Greenberg's conjecture, $p$-class groups, class field theory, 
$p$-adic regulators, $p$-ramification theory, Iwasawa's theory}

\begin{abstract}
Let $k$ be a totally real number field and $p$ a prime. We show that the 
``complexity'' of Greenberg's conjecture ($\lambda = \mu = 0$) is of 
$p$-adic nature governed (under Leopoldt's conjecture) by the finite torsion 
group ${\mathcal T}_k$ of the Galois group of the maximal abelian 
$p$-ramified pro-$p$-extension of $k$, by means of images in ${\mathcal T}_k$
of ideal norms from the layers $k_n$ of the cyclotomic tower (Theorem \ref{t}). 
These images are obtained via the formal algorithm computing, by ``unscrewing'',
the $p$-class group of~$k_n$. Conjecture \ref{conj} of equidistribution 
of these images would show that the number of steps $b_n$ of the algorithms
is bounded as $n \to \infty$, so that Greenberg's conjecture, hopeless within 
the sole framework of Iwasawa's theory, would hold true ``with probability $1$''. 
No assumption is made on $[k : \Q]$, nor on the decomposition of $p$ in $k/\Q$.
\end{abstract}

\maketitle

\tableofcontents

\section{Introduction} 
Let $k$ be a totally real number field, $p \geq 2$ a prime number and
$S$ the set of $p$-places ${\mathfrak p} \mid p$ of $k$. Let $k_\infty$ be 
the cyclotomic $\Z_p$-extension of $k$ and $k_n$ the degree $p^n$ 
extension of $k$ in $k_\infty$.
Let ${\mathcal C}_k$ and ${\mathcal C}_{k_n}$ be the $p$-class groups 
of $k$ and $k_n$, respectively. 
We denote by ${\mathcal T}_k$ the torsion group of ${\mathcal A}_k := \Gal(H_k^\pr/k)$, 
where $H_k^\pr$ is the maximal abelian $S$-ramified  pro-$p$-extension of $k$
(i.e., unramified outside $S$), assuming the Leopoldt conjecture for $p$ in $k_\infty$.
The group ${\mathcal T}_k$ is closely related to the deep Tate--Chafarevich 
group (same $p$-rank):
$$\Cha_k^2 := {\rm Ker} \big [{\Hom}^2 ({\mathcal G}_{k,S},\F_p) \rightarrow
\oplus_{{\mathfrak p} \mid p} \, {\Hom}^2 ({\mathcal G}_{k_{\mathfrak p}},\F_p) \big], $$
where ${\mathcal G}_{k,S}$ is the Galois group of the maximal $S$-ramified 
pro-$p$-extension of~$k$ (hence ${\mathcal A}_k = {\mathcal G}_{k,S}^\ab$)
and ${\mathcal G}_{k_{\mathfrak p}}$ the local analogue over $k_{\mathfrak p}$;
but ${\mathcal T}_k$ is very easily computable and relates the $p$-class group 
and the $p$-adic regulator.

\smallskip
We call {\it Greenberg's conjecture for $k$ and $p$}, the nullity of the Iwasawa 
invariants $\lambda$, $\mu$ (see the origin of the conjecture 
in \cite[Theorems 1 and 2]{Gre}).
The main effective test for this conjecture is 
the criterion of Jaulent \cite[Th\'eor\`emes A, B]{J2} proving that the
conjecture is equivalent to the capitulation in $k_\infty$ of the logarithmic 
class group $\wt {\mathcal C}_k$ of $k$ (defined in \cite{J1} with PARI/GP
pakage in \cite{BJ}), an invariant
also related to $S$-ramification theory.\,\footnote{For more information on the
main pioneering works about {\it the practice} of this theory, see ``history of abelian 
$p$-ramification'' in \cite[Appendix]{Gra7} (e.g., Gras: ``Crelle's Journal'' 
(1982/83), Jaulent: ``Ann. Inst. Fourier'' (1984), Nguyen Quang Do: 
``Ann. Inst. Fourier'' (1986), Movahhedi ``Th\`ese'' (1988) and others). 
For convenience, we mostly refer to our book (2003/2005), which contains all 
the needed results in the most general statements. For more broad context 
about the base field and the set $S$, see \cite{Mai} and its bibliography.}
For specific cases of decomposition of $p$, as in \cite{Gre}, see \cite{Ng2}.

In our opinion, many aesthetic statements, equivalent to
Greenberg's conjecture, are translations of standard formalism of class field 
and Iwasawa's theories. In other words, {\it some ``non-algebraic'' $p$-adic 
aspects of the ``diophantine construction'' of the class groups at each layer $k_n$}, 
are not taken into account. We show how this construction works and study its 
arithmetic complexity by means of the number $b_n$ of steps of the algorithms
which become oversized in the tower as soon as $\lambda$ or $\mu$ are non-zero, 
suggesting the triviality of the algorithms for $n \gg 0$ (i.e., $b_n \leq 1$).

Our purpose has nothing to do with computational or theoretical approaches in the area
of the ``main theorem'' on abelian fields (analytic formulas, cyclotomic units, 
$L_p$-functions, etc.) as, for instance, the very many contributions 
(cited in our papers \cite{Gra3,Gra4}),
also giving computations and suggesting that equidistribution results 
may have striking consequences for the conjecture; our
viewpoint is essentially logical and based on the governing group ${\mathcal T}_k$,
because we have conjectured that ${\mathcal T}_k = 1$
for all $p \gg 0$, due to properties of $p$-adic regulators \cite{Gra6}
($p$-rationality of $k$, as defined in \cite{Mov} for such fields), 
which relativizes Greenberg's conjecture, obvious in that case.

In many papers, as in \cite{Gre}, the
decomposition of $p$ in $k/\Q$ plays a specific role, which is not necessary for us. 
We shall not put any assumption on the degree of $k$ nor on the decomposition 
of $p$ in $k/\Q$.

\begin{conventions} \label{conventions}
Subject to replace $k$ by a layer $K=k_{n_0}$ of $k_\infty = K_\infty$, 
one may assume, without any loss of generality, that $p$ is totally ramified in 
$K_\infty/K$ and is such that Iwasawa's formula for $\order {\mathcal C}_{k_n}$ 
holds true for all layers above $K$;
indeed, we have $\lambda(K) = \lambda(k)$, $\mu(K) = [K : k]\, \mu(k)$ and
$\nu(K)=\nu(k) + \lambda(k)\,n_0$. 
\end{conventions}

\section{Main results}
The results of the paper may be described as follows in two parts:

\smallskip
{\bf (A)} From results of \cite{Gra2,Gra3,Gra4}.
The formal algorithm, determining $\order {\mathcal C}_{k_n}$ (whence
giving the Iwasawa invariants), computes inductively 
the classical filtration $({\mathcal C}_{k_n}^i)_{i \geq 0}$, where 
${\mathcal C}_{k_n}^{i+1} / {\mathcal C}_{k_n}^i := 
({\mathcal C}_{k_n} / {\mathcal C}_{k_n}^i)^{G_n}$, for 
all $i \geq 0$ (${\mathcal C}_{k_n}^0=1$), where $G_n = \Gal(k_n/k)$. 
We have the decreasing $i$-sequence:
\begin{equation}\label{algo} 
\order  \big( {\mathcal C}_{k_n}^{i+1} / {\mathcal C}_{k_n}^i \big) = 
\frac{\order  {\mathcal C}_k}{\order \No_{k_n/k}( {\mathcal C}_{k_n}^i)} \cdot  
\frac{p^{n \cdot (\order S -1)}}
{(\Lambda_n^i : \Lambda_n^i \cap \No_{k_n/k}(k_n^\times))}, 
\end{equation}
\noindent
with the increasing $i$-sequence of groups $\Lambda_n^i$, from $\Lambda_n^0=E_k$:
\begin{equation}\label{represent}
\Lambda_n^i := \{x \in k^\times,\ \, (x) = \No_{k_n/k}({\mathfrak A}), 
\ \, \cl_{k_n}({\mathfrak A}) \in  {\mathcal C}_{k_n}^i \} .
\end{equation} 

Then ${\mathcal C}_{k_n}^{i+1} / {\mathcal C}_{k_n}^i$ in \eqref{algo} becomes trivial 
for some minimal $i=:b_n \geq 0$ (giving ${\mathcal C}_{k_n}^{b_n} = 
{\mathcal C}_{k_n}$) as soon as the two factors vanish. 
Thus the length $b_n$ of the algorithm depends on the decreasing evolution 
of the ``class factor'' $\ffrac{\order {\mathcal C}_k}{\order \No_{k_n/k}( {\mathcal C}_k^i)}$ 
dividing $\order {\mathcal C}_k$ and that of the ``norm factor'' $\ffrac{p^{n \cdot (\order S -1)}}
{(\Lambda_n^i : \Lambda_n^i \cap \No_{k_n/k}(k_n^\times))}$
dividing the order of a suitable quotient ${\mathcal R}_k^\nr$ of the 
normalized $p$-adic regulator ${\mathcal R}_k$ (defined in \cite[\S\,5]{Gra5}), 
related to the ramification of $p$ in $H_k^\pr/k_\infty$ 
(Theorem \ref{genus2}, Corollary \ref{fact}).
We prove in Theorem \ref{O}, under Conventions \ref{conventions},
the following inequalities (where $v_p$ is the $p$-adic valuation):
$$b_n \leq \lambda \cdot n + \mu \cdot p^n + \nu \leq 
v_p(\order {\mathcal C}_k \cdot \order {\mathcal R}_k^\nr) \cdot b_n, $$
giving ${\mathcal C}_k = {\mathcal R}_k^\nr =1
\Longleftrightarrow \lambda=\mu=\nu= 0 \Longleftrightarrow b_n=0$ for all $n$. 

\smallskip
Taking $k$ hight enough in the tower, Greenberg's conjecture is equivalent 
to $b_n \leq 1$ for all $n$ (Corollary \ref{bn=1}), which constitutes a spectacular 
algorithmic discontinuity compared to $b_n \to \infty$ if $\lambda$ or $\mu$ 
are non-zero. In an heuristic point of view, it is ``necessary'' that the algorithms
become limited, because of the unpredictable behavior of the class and norm factors. 

\medskip
{\bf (B)} One may replace, in \eqref{represent}, the ideal norms 
${\mathfrak a} = \No_{k_n/k}({\mathfrak A})$ by representatives
${\mathfrak t} \in I_{k_n} \otimes \Z_p$ ($I_{k_n}$ is the group 
of prime-to-$p$ ideals of $k_n$) whose Artin symbols are in 
${\mathcal T}_k$, {\it hence finite in number\,} (main Theorem \ref{t}); 
so, each step of the algorithm (i.e., the evolution of the class and norm factors) 
only depends on at most $\order {\mathcal T}_k$ possibilities, taking the class of the 
random ideal ${\mathfrak t}$, then computing Hasse's symbols
on $S$ of numbers $\tau \in k_n^\times \otimes \Z_p$ when 
${\mathfrak t}=(\tau)$ is principal, in other words, 
for this last case a classical situation 
involving random $\Z/p^n \Z$-matrices of symbols for which some 
equidistribution results are proven \cite[Section 6]{Sm}. 

\smallskip
Then, under the natural Conjecture \ref{conj} of independence and 
randomness of the data obtained, inductively, at each step of the algorithm, 
one would obtain that Greenberg's conjecture holds true with ``probability $1$'', 
suggesting possible analytic proof of this fact, using the powerful 
techniques used in \cite{KP,Sm} for degree $p$ cyclic extensions of $\Q$, but 
unfortunately, probably not a complete proof of Greenberg's conjecture.

\section{Abelian $p$-ramification and genus theories}\label{sub1}

\subsection{Abelian $p$-ramification -- The torsion group ${\mathcal T}_k$}
\label{notations}
Recall the data needed for the study of the Galois group 
${\mathcal A}_k$ of the maximal abelian
$p$-ramified pro-$p$-extension $H_k^\pr$ of $k$
and its torsion group ${\mathcal T}_k$ (under Leopoldt's conjecture).
Let $k'^\times$ be the subgroup of $k^\times$ of prime-to-$p$ elements:

\smallskip
\noindent
\quad $\bf (a)$ Let $E_k$ be the group of $p$-principal units 
$\varepsilon \equiv 1 \!\!\pmod {\prod_{{\mathfrak p} \in S} {\mathfrak p}}$ 
of~$k$. Let $U_k:=\hbox{$\bigoplus_{{\mathfrak p} \in S}$} U_{\mathfrak p}$ 
be the $\Z_p$-module of $p$-principal local units, where $U_{\mathfrak p}$ 
is the group of ${\mathfrak p}$-principal
units of the ${\mathfrak p}$-completion $k_{\mathfrak p}$ of $k$. 
Let $\mu_k$ (resp. $\mu_{\mathfrak p}$) be the 
group of $p$th roots of unity of $k$ (resp. $k_{\mathfrak p}$). Put
$W_k := \bigoplus_{{\mathfrak p} \in S} 
\mu_{\mathfrak p}$ and ${\mathcal W}_k := W_k/\mu_k$; thus,
${\mathcal W}_k = W_k$ for $p \ne 2$ and ${\mathcal W}_k = 
W_k / \langle \, \pm 1\, \rangle$ for $p=2$.

\smallskip
\noindent
\quad $\bf (b)$
Let $\iota :  k'^\times \otimes \Z_p \to U_k$ be the canonical 
surjective diagonal map. 
Let $\ov  E_k$ be the closure of 
$\iota E_k$ in $U_k$ and let $H_k^\nr$ be the $p$-Hilbert class field of~$k$.
By class field theory, $\Gal(H_k^\pr/k_\infty H_k^\nr) \simeq 
 \tor_{\Z_p}(U_k/\ov  E_k) = U_k^*/\ov  E_k$, where 
$U_k^* := \{u \in U_k, \,\ \No_{k/\Q}(u) \in \langle \,\pm 1\, \rangle \}$.

\smallskip
\noindent
\quad $\bf (c)$ Let ${\mathcal C}_k$ be the $p$-class group of $k$ and let:
\begin{equation}\label{u*}
{\mathcal R}_k := \tor_{\Z_p} (\log(U_k)/\log(\ov  E_k)) 
= \log(U_k^*)/\log(\ov  E_k)
\end{equation}
be the normalized $p$-adic regulator \cite[\S\,5]{Gra5}.

\smallskip
\noindent
\quad $\bf (d)$ The sub-group of ${\mathcal T}_k$ fixing the 
Bertrandias--Payan field $H_k^\bp$ is isomorphic to~${\mathcal W}_k$
(the field $H_k^\bp$ is the compositum of all $p$-cyclic extensions of $k$ 
embeddable in $p$-cyclic extensions of arbitrary large degree).

\smallskip
Recall some classical fundamental results (under Leopoldt's conjecture) that
may be found in \cite[Corollary III.3.6.3]{Gra1}, \cite[Lemma 3.1, Corollary 3.2]{Gra5},
\cite[D\'efinition 2.11, Proposition 2.12]{J11}, then \cite[\S\,1]{Ng} or \cite{Mov}, via cohomology:

\begin{proposition}
We have the exact sequences:
\begin{equation}
1 \to U_k^*/\ov E_k \too  {\mathcal T}_k 
\too \Gal(k_\infty H_k^\nr/k_\infty) \simeq {\mathcal C}_k \to 1,
\end{equation}
\begin{equation}\label{mu}
1 \to {\mathcal W}_k \too U_k^*/\ov E_k \too  
\log(U_k^*)/\log(\ov  E_k) \simeq {\mathcal R}_k \to 0.
\end{equation}
\end{proposition}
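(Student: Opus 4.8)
The plan is to derive both exact sequences from global and local class field theory over $k$ together with the $p$-adic logarithm, using the data (a)--(d) recalled above; concretely I will take as known the identification from (b), $\Gal(H_k^\pr/k_\infty H_k^\nr)\simeq\tor_{\Z_p}(U_k/\ov E_k)=U_k^*/\ov E_k$, and, from \eqref{u*}, that $\mathcal{R}_k=\log(U_k^*)/\log(\ov E_k)$.

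For the first sequence I would start by pinning down the field fixed by $\mathcal{T}_k$. The cyclotomic $\Z_p$-extension $k_\infty/k$ is abelian and ramified only at $S$ and $H_k^\nr/k$ is abelian and unramified, so $k_\infty H_k^\nr\subseteq H_k^\pr$. As $k$ is totally real, under Leopoldt's conjecture $\mathcal{A}_k=\Gal(H_k^\pr/k)$ is a finitely generated $\Z_p$-module of rank $1$ with finite torsion subgroup $\mathcal{T}_k$, so $\mathcal{A}_k/\mathcal{T}_k\simeq\Z_p$; the restriction $\mathcal{A}_k\to\Gal(k_\infty/k)\simeq\Z_p$ kills the torsion, hence induces a surjection $\Z_p\to\Z_p$, necessarily an isomorphism, and therefore $\mathcal{T}_k=\Gal(H_k^\pr/k_\infty)$. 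Restricting automorphisms to the abelian subextension $k_\infty H_k^\nr/k_\infty$ then gives the short exact sequence $1\to\Gal(H_k^\pr/k_\infty H_k^\nr)\too\Gal(H_k^\pr/k_\infty)\too\Gal(k_\infty H_k^\nr/k_\infty)\to 1$. I would identify the left-hand term with $U_k^*/\ov E_k$ via (b), the middle with $\mathcal{T}_k$, and the right-hand term via $\Gal(k_\infty H_k^\nr/k_\infty)\simeq\Gal(H_k^\nr/k_\infty\cap H_k^\nr)=\Gal(H_k^\nr/k)\simeq\mathcal{C}_k$, using Artin reciprocity and the fact that $k_\infty\cap H_k^\nr=k$ (being at once a subextension of the totally ramified $k_\infty/k$, cf. Conventions \ref{conventions}, and of the unramified $H_k^\nr/k$).

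For the second sequence I would use the $p$-adic logarithm $\log\colon U_k\to U_k\otimes_{\Z_p}\Q_p$ (componentwise on $U_k=\bigoplus_{\mathfrak p}U_{\mathfrak p}$), whose kernel is the torsion subgroup $W_k$ of $U_k$; since the norm to $\Q_p$ of a $p$-power root of unity lies in $\langle\pm1\rangle$ one has $W_k\subseteq U_k^*$. This yields the exact rows $1\to W_k\too U_k^*\too\log(U_k^*)\to 0$ and $1\to W_k\cap\ov E_k\too\ov E_k\too\log(\ov E_k)\to 0$ (note $\iota\mu_k\subseteq\ov E_k$ as $\mu_k\subseteq E_k$); applying the snake lemma to the inclusion of the second into the first, with injective vertical maps, produces $1\to W_k/(W_k\cap\ov E_k)\too U_k^*/\ov E_k\too\log(U_k^*)/\log(\ov E_k)\to 0$, after which it only remains to identify $W_k\cap\ov E_k$ with $\mu_k$ and to invoke $\mathcal{W}_k=W_k/\mu_k$ together with \eqref{u*}.

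The two steps I expect to carry the actual content — everything else being formal Galois theory and a diagram chase — are the appeals to Leopoldt's conjecture: that $\mathcal{A}_k$ has $\Z_p$-rank exactly $1$ (needed for $\mathcal{T}_k=\Gal(H_k^\pr/k_\infty)$), and the equality $W_k\cap\ov E_k=\mu_k$, i.e. that closing up $\iota E_k$ inside $U_k$ creates no spurious $\Z_p$-torsion. For the latter I would argue that under Leopoldt the diagonal map $E_k\otimes\Z_p\to U_k$ is injective with compact, hence closed, image $\ov E_k$, so that $W_k\cap\ov E_k=\tor_{\Z_p}(\ov E_k)=\tor_{\Z_p}(E_k\otimes\Z_p)=\mu_k$; without Leopoldt a nonzero defect would let $\ov E_k$ acquire extra torsion and both sequences would collapse. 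Finally I would remark that the same two sequences can alternatively be extracted from the Poitou--Tate / Galois-cohomology description of $\mathcal{G}_{k,S}^{\ab}$, as in the references cited.
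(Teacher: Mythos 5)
Your proof is correct. Note that the paper itself gives no argument for this Proposition: it is stated as a recollection of classical facts with pointers to \cite{Gra1}, \cite{Gra5}, \cite{J11}, \cite{Ng}, \cite{Mov}, so there is no in-paper proof to compare against. Your derivation — extracting $\mathcal{T}_k=\Gal(H_k^\pr/k_\infty)$ from the $\Z_p$-rank-one statement of Leopoldt, reading off the first sequence from the tower $k_\infty\subseteq k_\infty H_k^\nr\subseteq H_k^\pr$ together with the given identification $\Gal(H_k^\pr/k_\infty H_k^\nr)\simeq U_k^*/\ov E_k$, and getting the second sequence from $\ker(\log)=W_k$, the closedness of $\iota(E_k\otimes\Z_p)$, and the identification $W_k\cap\ov E_k=\iota\mu_k$ — is exactly the standard class-field-theoretic argument those references use, and all the steps you flag as carrying the content (rank one from Leopoldt; $\tor_{\Z_p}(\ov E_k)=\iota\mu_k$, which again needs Leopoldt for injectivity of $\iota$ on $E_k\otimes\Z_p$; and $k_\infty\cap H_k^\nr=k$, justified via the Conventions) are indeed the places where the hypotheses are used.
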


\subsection{Genus theory}\label{normic}

We denote by  $H_{k_n}^\nr$ the $p$-Hilbert class field of $k_n$. 
Since $p$ is totally ramified in $k_n/k$ by convention, the inertia groups 
$I_{\mathfrak p}(k_n/k)$ in $k_n/k$, ${\mathfrak p} \in S$, are isomorphic 
to $G_n = \Gal(k_n/k)$. 

Let $\omega_n$ be the map which associates with 
$\varepsilon \in E_k$ the family of Hasse's 
symbols $\big( \frac{\varepsilon \, ,\, k_n/k}{{\mathfrak p}}\big) \in G_n$, 
${\mathfrak p} \in S$.
This yields the genus exact sequence interpreting the product formula 
of the Hasse symbols \cite[Corollary IV.4.4.1]{Gra1}:
\begin{equation*}
1  \to  E_k/E_k  \cap \No_{k_n/k}(k_n^\times) \mathop{\tooo}^{\omega_n} 
\Omega(k_n/k) \mathop{\tooo}^{\pi_n} \Gal(H_{k_n/k}/ k_n H_k^\nr) \to 1 ,
\end{equation*}

\noindent
where $\Omega(k_n/k)  := \big \{ (\sigma_{\mathfrak p})_{{\mathfrak p} \in S} 
\in G_n^{\order S}, \,\prod_{{\mathfrak p} \in S} \sigma_{\mathfrak p} = 1\big\} 
\simeq G_n^{\order S-1}$, then where $H_{k_n/k}$ is {\it the $p$-genus field} 
of $k_n/k$ defined as the maximal sub-extension of $H_{k_n}^\nr$, abelian over $k$. 
The image of $\omega_n$ is contained in $\Omega(k_n/k)$
and the map $\pi_n$ is defined as follows: 
with $(\sigma_{\mathfrak p})_{{\mathfrak p} \in S} \in G_n^{\order S}$, 
$\pi_n$ associates the product of the extensions $\sigma'_{\mathfrak p}$ of the 
$\sigma_{\mathfrak p}$ in the inertia groups $I_{\mathfrak p}(H_{k_n/k}/ H_k^\nr)$
generating $\Gal(H_{k_n/k}/ H_k^\nr)$; from the product formula, if
$(\sigma_{\mathfrak p})_{{\mathfrak p} \in S} \in \Omega(k_n/k)$, then
$\prod_{{\mathfrak p} \in S} \sigma'_{\mathfrak p}$ fixes both $H_k^\nr$ and $k_n$, 
whence $k_n H_k^\nr$. The genus exact sequence shows that the kernel of 
$\pi_n$ is $\omega_n(E_k)$. 
\begin{diagram}\label{diagram1}
\unitlength=0.7cm
$$\vbox{\hbox{\hspace{-6.5cm} \vspace{-0.2cm}
\begin{picture}(11.5,5.2)
\bezier{550}(3.8,4.9)(10.0,6.0)(16.2,4.9)
\put(10.0,5.6){\ft ${\mathcal T}_k$}
\bezier{350}(7.8,4.2)(12.4,3.65)(14.2,4.2)
\put(12.0,3.6){\ft ${\mathcal R}_k$}
\put(5.3,4.15){\ft  ${\mathcal C}_k$}
\put(8.2,4.50){\line(1,0){2.2}}
\put(12.3,4.50){\line(1,0){1.5}}
\put(14.5,4.50){\line(1,0){1.5}}
\put(13.8,4.4){$H_k^\bp$}
\put(15.0,4.15){\ft ${\mathcal W}_k$}
\put(4.2,4.50){\line(1,0){2.5}}
\put(6.9,4.4){$k_\infty H_k^\nr$}
\put(3.6,4.4){$k_\infty$}
\put(10.5,4.4){$k_\infty H_{k_n/k}$}
\put(16.2,4.4){$H_k^\pr$}
\put(8.2,2.50){\line(1,0){2.2}}
\put(11.9,2.41){- - - - - - - - - - - - - - - - -}
\put(4.2,2.50){\line(1,0){2.5}}
\put(4.1,0.50){\line(1,0){2.8}}
\put(8.6,2.75){\ft $\prod_{{\mathfrak p} \in S} \sigma'_{\mathfrak p}$}
\put(14.2,3.3){\ft ${\mathcal C}_{k_n}^{1-\sigma_n}$}
\bezier{250}(11.0,2.75)(14.25,3.4)(17.5,2.75)
\put(5.3,0.6){\ft  ${\mathcal C}_k$}
\put(3.7,0.8){\line(0,1){1.4}}
\put(7.5,0.8){\line(0,1){1.4}}
\put(3.7,2.8){\line(0,1){1.4}}
\put(7.5,2.8){\line(0,1){1.4}}
\put(11.1,2.8){\line(0,1){1.4}}
\bezier{450}(3.9,2.2)(6.0,1.6)(10.5,2.2)
\put(6.1,1.55){\ft  ${\mathcal G}_{k_n/k}$}
\put(10.5,2.4){$H_{k_n/k}$}
\put(17.5,2.4){$H_{k_n}^\nr$}
\put(6.9,2.4){$k_n H_k^\nr$}
\put(3.6,2.4){$k_n$}
\put(7.2,0.4){$H_k^\nr$}
\put(3.6,0.4){$k$}
\put(2.6,1.6){\ft $G_n \!=$}
\put(2.8,1.2){\ft $\langle \sigma_n \rangle$}
\bezier{350}(8.1,0.55)(10.0,0.9)(10.7,2.2)
\put(10.3,1.42){\ft $\langle I_{\mathfrak p}
(H_{k_n\!/\!k}/ H_k^\nr) \rangle_{{\mathfrak p} \in S}$}
\bezier{550}(8.1,0.45)(16.5,0.6)(16.5,4.2)
\put(12.7,0.6){\ft $U_k/\ov E_k$}
\end{picture}   }} $$
\end{diagram}
\unitlength=1.0cm
We have, using Chevalley's ambiguous class number 
formula \cite[p.\,402]{Che}:
\begin{equation}\label{chevalley}
\order {\mathcal G}_{k_n/k} =\order \Gal(H_{k_n/k}/k_n)
=  \ffrac{\order {\mathcal C}_{k_n}}{\order {\mathcal C}_{k_n}^{1-\sigma_n}}
= \order  {\mathcal C}_k \cdot \ffrac{p^{n \cdot (\order S -1)}}
{(E_k : E_k \cap \No_{k_n/k}(k_n^\times))}
\end{equation}

In the Diagram, the genus field
$H_{k_n/k}$ is the fixed field of the image of ${\mathcal C}_{k_n}^{1-\sigma_n}$,
where ${\mathcal G}_{k_n/k} = \Gal(H_{k_n/k}/ k_n)$ is the genus group in $k_n/k$.

\subsection{Groups ${\mathcal R}_k^\nr$, ${\mathcal R}_k^\ram$  -- 
Ramification in $H_k^\pr/k_\infty$}\label{ram}

The genus group ${\mathcal G}_{k_n/k}$ has, in our context, the following 
main property that will give Theorem \ref{genus2} when $n$ is large enough:

\begin{lemma} \label{genus} 
For all $n \geq 0$, $k_\infty H_{k_n/k} \subseteq H_k^\bp$. Then
$\order {\mathcal G}_{k_n/k} \big\vert \,\order {\mathcal C}_k \cdot  {\mathcal R}_k$,
which is equivalent (using formula \eqref{chevalley}) to $\ffrac{p^{n \cdot (\order S -1)}}
{(E_k : E_k \cap \No_{k_n/k}(k_n^\times))} \,\big\vert \,\order {\mathcal R}_k$.
\end{lemma}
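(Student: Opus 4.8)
The plan is to prove the two assertions in turn. First I would establish the inclusion $k_\infty H_{k_n/k} \subseteq H_k^\bp$. Recall that $H_k^\bp$ is the compositum of all $p$-cyclic extensions of $k$ embeddable in $p$-cyclic extensions of arbitrarily large degree; equivalently (by the description in $\mathbf{(d)}$ and the diagram), $H_k^\bp$ is the subfield of $H_k^\pr$ fixed by the image of $\mathcal{W}_k$ in $\mathcal{T}_k$, so it contains $k_\infty H_k^\nr$ and is $S$-ramified over $k$. The genus field $H_{k_n/k}$ is abelian over $k$ and unramified over $k_n$, and the only primes that can ramify in $H_{k_n/k}/k$ lie in $S$ (they ramify already in $k_n/k$, which is $S$-ramified); hence $k_\infty H_{k_n/k}/k$ is abelian and $S$-ramified, so $k_\infty H_{k_n/k} \subseteq H_k^\pr$. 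To land inside $H_k^\bp$ I would check the embeddability condition: since $k_n/k$ embeds into the cyclotomic $\Z_p$-extension $k_\infty/k$, the inertia subgroups $I_{\mathfrak p}(H_{k_n/k}/H_k^\nr) \cong G_n$ in the diagram are, locally at each ${\mathfrak p}\in S$, quotients of the local Galois group of $k_{\mathfrak p}$ that are realized as restrictions of $\Z_p$-extensions; this is exactly the local condition characterizing the Bertrandias--Payan field. More directly, one observes $H_{k_n/k} \subseteq H_{k_\infty/k}$ where $H_{k_\infty/k} := \bigcup_n H_{k_n/k}$ is abelian over $k$ and $S$-ramified, and each finite subextension of $H_{k_\infty/k}/k_\infty$ is unramified while $H_{k_\infty/k}$ contains $k_\infty$, so every $p$-cyclic piece of $H_{k_\infty/k}/k$ sits in a $\Z_p$-extension-by-unramified tower, i.e.\ is embeddable in $p$-extensions of arbitrarily large degree; thus $H_{k_\infty/k} \subseteq H_k^\bp$, and in particular $k_\infty H_{k_n/k} \subseteq H_k^\bp$.

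Granting the inclusion, the divisibility $\order \mathcal{G}_{k_n/k} \mid \order \mathcal{C}_k \cdot \order \mathcal{R}_k$ follows from a Galois-order count along the diagram. From $\mathbf{(d)}$, $\Gal(H_k^\bp/k_\infty H_k^\nr)$ has order $\order \mathcal{T}_k / (\order \mathcal{W}_k \cdot \order \mathcal{C}_k)$; by the two exact sequences of the Proposition, $\order \mathcal{T}_k = \order \mathcal{C}_k \cdot \order(U_k^*/\ov E_k)$ and $\order(U_k^*/\ov E_k) = \order \mathcal{W}_k \cdot \order \mathcal{R}_k$, so $\order \Gal(H_k^\bp/k_\infty H_k^\nr) = \order \mathcal{R}_k$. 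Now $k_\infty H_{k_n/k} \subseteq H_k^\bp$ gives a surjection $\Gal(H_k^\bp/k_\infty H_k^\nr) \twoheadrightarrow \Gal(k_\infty H_{k_n/k}/k_\infty H_k^\nr)$, hence $\order \Gal(k_\infty H_{k_n/k}/k_\infty H_k^\nr) \mid \order \mathcal{R}_k$. On the other hand, $\Gal(k_\infty H_{k_n/k}/k_\infty) \twoheadleftarrow \Gal(H_{k_n/k}/k_n)\cdot\Gal(H_{k_n/k}/H_{k_n/k}\cap k_\infty)$-type comparison (more precisely, restriction $\Gal(H_{k_n/k}/k_n) \to \Gal(k_\infty H_{k_n/k}/k_\infty)$ is an isomorphism since $H_{k_n/k} \cap k_\infty = k_n$, as $k_\infty/k_n$ is $S$-ramified whereas $H_{k_n/k}/k_n$ is unramified), so $\order \mathcal{G}_{k_n/k} = \order \Gal(k_\infty H_{k_n/k}/k_\infty)$, which sits in the tower $k_\infty \subseteq k_\infty H_k^\nr \subseteq k_\infty H_{k_n/k}$ with $\order \Gal(k_\infty H_k^\nr/k_\infty) = \order \mathcal{C}_k$. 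Multiplying the two factors gives $\order \mathcal{G}_{k_n/k} \mid \order \mathcal{C}_k \cdot \order \mathcal{R}_k$. Finally the stated equivalent form is immediate from Chevalley's formula \eqref{chevalley}: dividing $\order \mathcal{G}_{k_n/k} = \order \mathcal{C}_k \cdot \ffrac{p^{n(\order S - 1)}}{(E_k : E_k \cap \No_{k_n/k}(k_n^\times))}$ by $\order \mathcal{C}_k$ converts the divisibility by $\order \mathcal{C}_k \cdot \order \mathcal{R}_k$ into divisibility of the norm factor by $\order \mathcal{R}_k$.

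The main obstacle I anticipate is the first step — pinning down precisely why $H_{k_n/k}$ (or the full $H_{k_\infty/k}$) lands inside the Bertrandias--Payan field rather than merely inside $H_k^\pr$. The subtlety is the local embeddability condition at the primes ${\mathfrak p}\in S$ of residue characteristic $p$ (the non-$p$ primes are harmless since they are unramified in the genus field over $k_\infty$): one must verify that the inertia contribution at each such ${\mathfrak p}$, coming from $k_n/k \hookrightarrow k_\infty/k$, is of the type that extends to a $\Z_p$-extension locally, which is exactly what the characterization of $H_k^\bp$ in $\mathbf{(d)}$ (via $\mathcal{W}_k$) encodes. Everything after that is a bookkeeping exercise with the exact sequences of the Proposition and Chevalley's formula.
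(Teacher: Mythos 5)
Your second half — computing $\order\Gal(H_k^\bp/k_\infty H_k^\nr)=\order\mathcal{R}_k$ from the exact sequences of the Proposition, passing to $\order\mathcal{G}_{k_n/k}=\order\Gal(k_\infty H_{k_n/k}/k_\infty)$ via $H_{k_n/k}\cap k_\infty=k_n$, and then reading off the divisibility and its Chevalley reformulation — is correct and essentially matches the paper, which simply observes $\order\mathcal{G}_{k_n/k}$ stabilizes at a divisor of $[H_k^\bp : k_\infty]=\order\mathcal{C}_k\cdot\order\mathcal{R}_k$.

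The first half, the inclusion $k_\infty H_{k_n/k}\subseteq H_k^\bp$, is where you have a real gap, and you already sense it. Neither of your two arguments closes it. The ``more direct'' one is in fact logically false: the existence of the infinite tower $H_{k_\infty/k}:=\bigcup_m H_{k_m/k}$ above a $p$-cyclic extension $M/k$ does \emph{not} make $M$ embeddable in $p$-cyclic extensions of arbitrarily large degree. If $\Gal(H_{k_\infty/k}/k)\cong\Z_p\times\Z/p\Z$, the degree-$p$ quotient cut out by the $\Z/p\Z$-factor admits no lift to a $\Z/p^2\Z$-quotient, even though $H_{k_\infty/k}$ is infinite; so ``sits in a $\Z_p$-by-unramified tower'' and ``embeds in cyclic $p$-extensions of unbounded degree'' are not the same condition, and the claimed inclusion $H_{k_\infty/k}\subseteq H_k^\bp$ is precisely the content one must prove, not a formal consequence. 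Your first sketch (the ``local condition'' via $I_{\mathfrak p}(H_{k_n/k}/H_k^\nr)\cong G_n$) gestures in the right direction but never identifies the actual mechanism: it focuses on the free $\Z_p$-quotient coming from $k_\infty$, whereas what matters is the \emph{torsion} of the local unit groups.

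The paper's proof is short and uses a different, sharper observation. In the idelic description of $\Gal(k^\ab/k)$, each $U_{\mathfrak p}$ for ${\mathfrak p}\in S$ is the inertia group at ${\mathfrak p}$, so $\tor_{\Z_p}(U_{\mathfrak p})=\mu_{\mathfrak p}$ maps into $\Gal(H_k^\pr/k)$ as a torsion subgroup of inertia. Being torsion, its image fixes $k_\infty$ (whose Galois group is $\Z_p$-free); being inertial at ${\mathfrak p}$, it fixes any extension of $k_\infty$ unramified at ${\mathfrak p}$, in particular $k_\infty H_{k_n/k}$. Thus the image of $W_k=\bigoplus_{\mathfrak p}\mu_{\mathfrak p}$ in $\Gal(H_k^\pr/H_k^\nr)$ — which is $\mathcal{W}_k$, whose fixed field is $H_k^\bp$ — fixes $k_\infty H_{k_n/k}$, giving the inclusion. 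This is cleaner than any embeddability check: it hinges on the pair of facts that $\mathcal{W}_k$ is torsion \emph{and} inertial, and that the genus field is unramified over $k_\infty$. I recommend you replace both of your first-step sketches with this argument.
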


\begin{proof}
Indeed, using the idelic global reciprocity map (under Leopoldt's conjecture), 
we have the fundamental diagram \cite[\S\,III.4.4.1]{Gra1} of the Galois 
group of the maximal abelian pro-$p$-extension $k^\ab$ of $k$, 
with our present notations, where $F_v$ is the residue field of the 
tame place $v$ (finite or infinite) and where  $H_k^\ta$ is
the maximal tame sub-extension of $k^\ab$.
The fixed field of $U_k = \bigoplus_{{\mathfrak p} \in S} U_{\mathfrak p}$ 
is $H_k^\ta$ since each $U_{\mathfrak p}$ is the inertia group of ${\mathfrak p}$ 
in $k^\ab/k$. Thus, $\tor_{\Z_p} (U_{\mathfrak p}) = \mu_{\mathfrak p}$, 
restricted to $\Gal(H_k^\pr/k)$, fixes $k_\infty$ and since 
$k_\infty H_{k_n/k}/k_\infty$ is unramified, it fixes 
$k_\infty H_{k_n/k}$ for all $n \geq 0$. 
\begin{diagram}\label{diagram2}
\unitlength=0.6cm
$$\vbox{\hbox{\hspace{-1.0cm} 
\begin{picture}(11.5,3.3)
\put(8.5,2.50){\line(1,0){3.0}}
\put(1.6,2.50){\line(1,0){5.9}}
\put(1.6,0.50){\line(1,0){5.9}}
\put(1.0,0.9){\line(0,1){1.20}}
\put(0.0,0.5){\line(1,0){0.70}}
\put(8.00,0.9){\line(0,1){1.20}}
\bezier{400}(1.2,2.85)(6.45,3.4)(11.75,2.85)
\put(5.4,3.4){\ft ${\prod_{v \notin S}{F_v^\times \! \otimes\! \Z_p}}$}
\bezier{280}(8.5,0.45)(11.5,0.5)(11.9,2.2)
\put(11.2,0.8){\ft $U_k \!=\! \bigoplus_{{\mathfrak p} \in S} U_{\mathfrak p}$}
\bezier{250}(8.5,2.3)(10.0,1.8)(11.6,2.3)
\put(9.4,1.6){\ft $E_k \! \otimes\! \Z_p$}
\put(11.7,2.3){$k^\ab$}
\put(7.6,2.3){$M_0$}
\put(0.6,2.3){$H_k^\pr$}
\put(7.7,0.4){$H_k^\ta$}
\put(0.7,0.4){$H_k^\nr$}
\put(-0.4,0.4){$k$}
\put(1.2,1.4){\ft $U_k/\ov E_k$}
\end{picture}   }} $$
\end{diagram}
\unitlength=1.0cm
\noindent
In Diagram \ref{diagram1}, the restriction of 
$W_k = \bigoplus_{{\mathfrak p} \in S}\mu_{\mathfrak p}$
to $\Gal(H_k^\pr/H_k^\nr)$ is isomorphic to $W_k/\mu_k = {\mathcal W}_k$
whose fixed field is $H_k^\bp$; whence the first claim. 

\smallskip
The second one is obvious since non-ramification propagates. 
Then $\order {\mathcal G}_{k_n/k}$ increases with $n$ and 
stabilizes at a divisor of 
$[H_k^\bp : k_\infty] = \order {\mathcal C}_k \!\cdot \order {\mathcal R}_k$. 
\end{proof}

Put ${\mathcal G}_k \simeq {\mathcal G}_{k_n/k}$ for $n$ large enough. 
This group is called the genus group of $k_\infty/k$; then the field $H_k^\gen 
:= \hbox{$\bigcup_m$} \, H_{k_m/k}$ (the genus field of $k_\infty/k$) is unramified 
over $k_\infty$ of Galois group ${\mathcal G}_k$. We can state more precisely:

\begin{theorem}\label{defhgen}
Let $n_0\geq 0$ be such that $\order {\mathcal G}_{k_{n_0}/k}$ stabilizes,
definig the genus field $H_k^\gen$ such that 
$\Gal(H_k^\gen/k_\infty) = {\mathcal G}_k$. 
Then $H_k^\gen$ is the maximal unramified exten\-sion of 
$k_\infty$ in $H_k^\pr$ and $\Gal(H_k^\pr/H_k^\gen)$
$\simeq \big \langle\, \tor_{\Z_p}(U_{\mathfrak p} \ov E_k/ \ov E_k)
\,\big \rangle_{{\mathfrak p} \in S}$. 
\end{theorem}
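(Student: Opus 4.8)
\textbf{Proof proposal for Theorem \ref{defhgen}.}

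The plan is to identify $H_k^\gen = \bigcup_m H_{k_m/k}$ inside the diagram for $\Gal(H_k^\pr/k_\infty)$, whose subquotients are $\mathcal{C}_k$, $\mathcal{R}_k$ and $\mathcal{W}_k$, and to pin down both the ``unramifiedness'' and the description of the complementary Galois group simultaneously. First I would recall from Lemma \ref{genus} that each $k_\infty H_{k_n/k}/k_\infty$ is unramified and that $\order{\mathcal{G}_{k_n/k}}$ stabilizes at $n_0$, so $H_k^\gen$ is a well-defined unramified extension of $k_\infty$ with $\Gal(H_k^\gen/k_\infty) = \mathcal{G}_k$. For maximality, suppose $L/k_\infty$ is unramified with $k_\infty \subseteq L \subseteq H_k^\pr$; since all ramification of $H_k^\pr/k_\infty$ at a place above $p$ is encoded by the inertia subgroups, an extension of $k_\infty$ is unramified at ${\mathfrak p}$ exactly when its Galois group is killed by the image of $I_{\mathfrak p}(H_k^\pr/k_\infty)$. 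Using Diagram \ref{diagram2}, $U_k = \bigoplus_{{\mathfrak p} \in S} U_{\mathfrak p}$ is the inertia subgroup of $\Gal(k^\ab/H_k^\ta)$ at the places in $S$, and restricting to $\Gal(H_k^\pr/k)$ one gets that the inertia of ${\mathfrak p}$ in $H_k^\pr/k_\infty$ is the image of $U_{\mathfrak p}$ in $U_k/\ov E_k$ modulo the part that fixes $k_\infty$ (that is, modulo $\tor_{\Z_p}(U_{\mathfrak p}) = \mu_{\mathfrak p}$, which is already accounted for by passing to $k_\infty$). Hence the maximal unramified (over $k_\infty$) subextension of $H_k^\pr$ is the fixed field of the subgroup of $\Gal(H_k^\pr/k_\infty)$ generated by these inertia images.

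Next I would show this fixed field is exactly $H_k^\gen$. One inclusion is Lemma \ref{genus}. For the reverse, if $L/k_\infty$ is finite unramified inside $H_k^\pr$, then $L \subseteq k_\infty H_{k_m}^\nr$ for $m \gg 0$ (the $p$-Hilbert class field of $k_m$ catches all unramified $p$-extensions of $k_m$, and $L/k_m$ is unramified for $m$ large enough that $k_m \cap L$ exhausts the ramification at the finitely many places involved). Since $L/k$ need not be abelian, one restricts to the part of $L$ abelian over $k$: but $L \subseteq H_k^\pr$ is already abelian over $k$, so $L \subseteq k_\infty H_{k_m}^\nr \cap H_k^\pr$ is a subextension of $H_{k_m}^\nr$ abelian over $k$, hence contained in the $p$-genus field $H_{k_m/k}$ by its very definition, so $L \subseteq k_\infty H_{k_m/k} \subseteq H_k^\gen$. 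This gives maximality.

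Finally, for the structural statement $\Gal(H_k^\pr/H_k^\gen) \simeq \big\langle \tor_{\Z_p}(U_{\mathfrak p}\ov E_k/\ov E_k)\big\rangle_{{\mathfrak p}\in S}$, I would combine the two descriptions: $\Gal(H_k^\pr/H_k^\gen)$ is the subgroup of $\Gal(H_k^\pr/k_\infty) \cong U_k/\ov E_k$ (restricted appropriately, modulo the $\mathcal{W}_k$-torsion absorbed into $k_\infty$) generated by the inertia subgroups at ${\mathfrak p}\in S$; and the inertia at ${\mathfrak p}$ in $H_k^\pr/k_\infty$ is precisely the image of $U_{\mathfrak p}$ in $U_k/\ov E_k$, whose torsion part (the part surviving over $k_\infty$, after the free Iwasawa-theoretic direction is quotiented out) is $\tor_{\Z_p}(U_{\mathfrak p}\ov E_k/\ov E_k)$. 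Summing over ${\mathfrak p} \in S$ gives the displayed generated subgroup. The main obstacle I anticipate is bookkeeping the interaction between the $\mathbb{Z}_p$-extension direction and the torsion: one must be careful that ``restricting inertia to $\Gal(H_k^\pr/k_\infty)$'' is exactly the operation that replaces $U_{\mathfrak p}$ by $\tor_{\Z_p}(U_{\mathfrak p}\ov E_k/\ov E_k)$ (the free rank-one piece of $U_{\mathfrak p}$ is what generates $k_\infty$ locally), and that the generated subgroup is the same whether one works with the images of the $U_{\mathfrak p}$ themselves or with their torsion parts — this is where the precise form of the exact sequences \eqref{mu} and of Diagram \ref{diagram2}, together with Leopoldt's conjecture ensuring finiteness of $\mathcal{T}_k$, must be invoked cleanly.
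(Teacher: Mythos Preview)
Your route for maximality differs from the paper's. The paper argues by contradiction: assuming a degree-$p$ unramified extension $L'_\infty$ of $H_k^\gen$ exists inside $H_k^\bp$, it descends to a finite layer $L = H_{k_n/k}$, chooses $L'$ with $L'L_\infty = L'_\infty$, and analyses the inertia of each $p$-place $v$ in the $\Z_p \times \Z/p\Z$-extension $L'_\infty/L$; either some $v$ is totally ramified (contradicting unramifiedness of $L'_\infty/L_\infty$) or all $v$ have inertia $\Gal(L'_\infty/L')$, making $L'/k_n$ unramified abelian over $k$ and strictly above the genus field (contradiction). Your idea --- push an arbitrary unramified $L/k_\infty$ inside $H_k^\pr$ down into some $k_\infty H_{k_m}^\nr$ and then invoke the definition of the genus field --- is a legitimate alternative and arguably more direct.

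However, your justification of the key step ``$L \subseteq k_\infty H_{k_m}^\nr$ for $m \gg 0$'' is wrong. You write that ``$L/k_m$ is unramified for $m$ large enough'', but since $k_\infty \subseteq L$ and $k_\infty/k_m$ is totally ramified at every ${\mathfrak p}\in S$, the extension $L/k_m$ is \emph{never} unramified (and ``$k_m \cap L$'' is just $k_m$). What must actually be shown is this: in $\Gal(L/k_m) \simeq \Gal(L/k_\infty) \times \Gal(k_\infty/k_m)$ (the product is direct because $L/k$ is abelian), each inertia group $I_{{\mathfrak p}_j}(L/k_m)$ is a $\Z_p$-complement to the finite factor $A:=\Gal(L/k_\infty)$, hence the graph of a homomorphism $\phi_j:\Gal(k_\infty/k_m)\to A$. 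Passing from $k_m$ to $k_{m'}$ restricts $\phi_j$ to $p^{m'-m}\Z_p$, so for $m'$ large all $\phi_j$ vanish and every $I_{{\mathfrak p}_j}(L/k_{m'})$ equals $\{0\}\times\Gal(k_\infty/k_{m'})$. Only then does the fixed field $L'$ of this common inertia satisfy $L'/k_{m'}$ unramified with $k_\infty L' = L$, giving $L \subseteq k_\infty H_{k_{m'}}^\nr$; your conclusion $L \subseteq k_\infty H_{k_{m'}/k}$ then follows since $L/k$ is abelian. With this repair the argument is complete.

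A smaller point: in your inertia discussion you say the inertia of ${\mathfrak p}$ in $H_k^\pr/k_\infty$ is the image of $U_{\mathfrak p}$ in $U_k/\ov E_k$ ``modulo the part that fixes $k_\infty$''. It is the opposite operation: one \emph{intersects} the image of $U_{\mathfrak p}$ in $\Gal(H_k^\pr/H_k^\nr)\simeq U_k/\ov E_k$ with the torsion subgroup $\Gal(H_k^\pr/k_\infty)=\mathcal T_k$, which yields $\tor_{\Z_p}(U_{\mathfrak p}\ov E_k/\ov E_k)$ --- exactly as the paper states in one line at the end of its proof.
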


\begin{proof}
To simplify, put $L_\infty := H_k^\gen$.
Let $L'_\infty$ be a degree $p$ unramified extension of $L_\infty$ 
in $H_k^\bp$; put $L=H_{k_n/k}$, $n \geq n_0$, and consider 
$L'$ such that $L' \cap L_\infty = L$ and 
$L' L_\infty = L'_\infty$; thus $\Gal(L_\infty/L) \simeq 
\Gal(L'_\infty/L') \simeq \Z_p$. 
Taking $n \gg n_0$, one may assume that
$L_\infty/L$ and $L'_\infty/L'$ are totally ramified at $p$.

\smallskip
Let $M \ne L'$ be a degree $p$ extension of $L$ 
in $L'_\infty$ and $v$ a $p$-place of $L$; if $v$ was 
unramified in $M/L$, the non-ramification would propagate 
over $L'$ in $L'_\infty$ (a contradiction). 
Thus, the inertia group of $v$ in $L'_\infty/L$ 
is necessarily  $\Gal(L'_\infty/L)$ or $\Gal(L'_\infty/L')$,
but this last case for all $v$ gives $L'/L/k_n$ unramified 
and $L'/k$ abelian (absurd by definition of the genus field 
$L=H_{k_n/k}$); so there exists $v_0$ totally ramified in 
$L'_\infty/L$, hence in $L'_\infty/L_\infty$ (absurd).

\smallskip
For ${\mathfrak p} \in S$, the inertia group
$I_{\mathfrak p}(H_k^\pr/k_\infty)$ is isomorphic to the torsion part,
$\tor_{\Z_p}(U_{\mathfrak p} \ov E_k/ \ov E_k)$, of the image
of $U_{\mathfrak p}$ in $U_k/ \ov E_k$.
\end{proof}

Let ${\mathcal R}_k^\nr := \Gal(H_k^\gen / k_\infty H_k^\nr)$,
${\mathcal R}_k^\ram := \Gal(H_k^\bp/H_k^\gen)$.
The top of Diagram \ref{diagram1} may be specified as follows
(with $H_k^\pr/H_k^\gen$ totally ramified at $p$):
\begin{diagram}\label{diagram3}
\unitlength=0.8cm
$$\vbox{\hbox{\hspace{-2.7cm} \vspace{0.4cm}
\begin{picture}(11.5,1.2)
\bezier{550}(0.8,0.8)(7.0,2.0)(13.2,0.8)
\put(7.0,1.5){\ft  ${\mathcal T}_k$}
\bezier{350}(4.4,0.2)(7.9,-0.3)(11.1,0.2)
\bezier{400}(4.3,0.2)(8.6,-1.2)(13.2,0.2)
\put(11.0,-0.6){\ft $U_k^*/\ov E_k$}
\put(8.0,-0.35){\ft  ${\mathcal R}_k$}
\put(5.8,0.7){\ft  ${\mathcal R}_k^\nr$}
\put(9.0,0.7){\ft  ${\mathcal R}_k^\ram$}
\put(2.4,0.15){\ft  ${\mathcal C}_k$}
\put(5.1,0.50){\line(1,0){2.0}}
\put(7.15,0.4){$H_k^\gen$}
\put(7.8,0.50){\line(1,0){3.0}}
\put(11.5,0.50){\line(1,0){1.5}}
\put(10.85,0.4){$H_k^\bp$}
\put(12.0,0.15){\ft ${\mathcal W}_k$}
\put(1.2,0.50){\line(1,0){2.5}}
\put(3.8,0.4){$k_\infty H_k^\nr$}
\put(0.6,0.4){$k_\infty$}
\put(13.1,0.4){$H_k^\pr$}
\bezier{350}(0.9,0.2)(4.0,-0.7)(7.1,0.2)
\put(3.7,-0.55){\ft  ${\mathcal G}_k$}
\end{picture}   }} $$
\end{diagram}
\unitlength=1.0cm

From Lemma \ref{genus}, formula \eqref{chevalley} and the above study, we can state
(a generalization of Taya analytic viewpoint \cite[Theorem 1.1]{T1}):
\begin{theorem} \label{genus2} 
Let $n \gg 0$ be such that ${\mathcal G}_{k_n/k} :=
\Gal(H_{k_n/k} / k_n) \simeq {\mathcal G}_k$. Then 
$\order {\mathcal G}_k = \order {\mathcal C}_k \cdot \order {\mathcal R}_k^\nr =
{\mathcal C}_{k_n}^{G_n}$, equivalent to $\ffrac{p^{n \cdot (\order S -1)}}
{(E_k : E_k \cap \No_{k_n/k}(k_n^\times))} = \order {\mathcal R}_k^\nr$.
\end{theorem}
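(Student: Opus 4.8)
\textbf{Proof plan for Theorem \ref{genus2}.}
The plan is to assemble the statement from three ingredients already in place: Chevalley's formula \eqref{chevalley}, the stabilization result of Lemma \ref{genus}, and the identification of the genus field of $k_\infty/k$ with the maximal unramified sub-extension of $H_k^\pr/k_\infty$ (Theorem \ref{defhgen}). First I would fix $n \gg 0$ so that ${\mathcal G}_{k_n/k} \simeq {\mathcal G}_k$; by Lemma \ref{genus} such an $n$ exists and $H_{k_n/k}$ then generates $H_k^\gen$ over $k_n$, with $k_\infty H_{k_n/k} = H_k^\gen$. From Diagram \ref{diagram3} we read off $\Gal(H_k^\gen/k_\infty) = {\mathcal G}_k$, and this group sits in the exact sequence $1 \to {\mathcal R}_k^\nr \to {\mathcal G}_k \to {\mathcal C}_k \to 1$ coming from the tower $k_\infty \subseteq k_\infty H_k^\nr \subseteq H_k^\gen$, where by definition ${\mathcal R}_k^\nr := \Gal(H_k^\gen/k_\infty H_k^\nr)$ and $\Gal(k_\infty H_k^\nr/k_\infty) \simeq {\mathcal C}_k$. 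Hence $\order {\mathcal G}_k = \order {\mathcal C}_k \cdot \order {\mathcal R}_k^\nr$.

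Next I would match this with the class-field-theoretic side. Since $p$ is totally ramified in $k_n/k$ by Conventions \ref{conventions}, Chevalley's formula \eqref{chevalley} gives $\order {\mathcal G}_{k_n/k} = \order {\mathcal C}_k \cdot \ffrac{p^{n \cdot (\order S -1)}}{(E_k : E_k \cap \No_{k_n/k}(k_n^\times))}$; for $n \gg 0$ the left side is $\order {\mathcal G}_k$. Comparing the two expressions for $\order {\mathcal G}_k$ and cancelling $\order {\mathcal C}_k$ yields the asserted identity $\ffrac{p^{n \cdot (\order S -1)}}{(E_k : E_k \cap \No_{k_n/k}(k_n^\times))} = \order {\mathcal R}_k^\nr$. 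The claim ${\mathcal G}_k = {\mathcal C}_{k_n}^{G_n}$ is just the ambiguous-class identification: by genus theory ${\mathcal C}_{k_n}^{G_n} = \Gal(H_{k_n/k}/k_n) = {\mathcal G}_{k_n/k} \simeq {\mathcal G}_k$ for $n$ large (and $H_{k_n/k}$ is the fixed field of ${\mathcal C}_{k_n}^{1-\sigma_n}$, as recorded after Diagram \ref{diagram1}).

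The one point that needs genuine care, rather than bookkeeping, is the exactness of $1 \to {\mathcal R}_k^\nr \to {\mathcal G}_k \to {\mathcal C}_k \to 1$ together with the claim that ${\mathcal R}_k^\nr$ is \emph{precisely} $\Gal(H_k^\gen/k_\infty H_k^\nr)$ and not merely a subquotient of ${\mathcal R}_k$. This requires knowing that $H_k^\nr$ is linearly disjoint from $k_\infty$ over $k$ (true since $k_\infty/k$ is totally ramified at $p$ while $H_k^\nr/k$ is unramified), so that $\Gal(k_\infty H_k^\nr/k_\infty) \simeq \Gal(H_k^\nr/k) = {\mathcal C}_k$; and that $k_\infty H_k^\nr \subseteq H_k^\gen$, which holds because $H_k^\gen$ is by Theorem \ref{defhgen} the maximal unramified extension of $k_\infty$ inside $H_k^\pr$ and $k_\infty H_k^\nr/k_\infty$ is unramified. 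With these inclusions, ${\mathcal R}_k^\nr$ is well-defined as a subgroup of ${\mathcal G}_k = \Gal(H_k^\gen/k_\infty)$ and the exact sequence is the tautological one for the tower $k_\infty \subseteq k_\infty H_k^\nr \subseteq H_k^\gen$; Diagram \ref{diagram3} records that ${\mathcal R}_k^\nr$ and ${\mathcal R}_k^\ram$ are the two ``halves'' of ${\mathcal R}_k$ under $H_k^\gen \subseteq H_k^\bp$. I expect the main obstacle to be purely expository: making sure the reader sees that the $n \gg 0$ in the hypothesis is the same threshold $n_0$ as in Theorem \ref{defhgen} and Lemma \ref{genus}, so that all three identifications (${\mathcal G}_{k_n/k} \simeq {\mathcal G}_k$, $k_\infty H_{k_n/k} = H_k^\gen$, and stabilization of the norm index) take effect simultaneously.
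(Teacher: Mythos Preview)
Your proposal is correct and follows essentially the same route as the paper, which does not give a separate proof but presents Theorem \ref{genus2} as an immediate consequence of Lemma \ref{genus}, Chevalley's formula \eqref{chevalley}, Theorem \ref{defhgen}, and Diagram \ref{diagram3}. Your write-up simply spells out how these ingredients combine: the tower $k_\infty \subseteq k_\infty H_k^\nr \subseteq H_k^\gen$ gives $\order {\mathcal G}_k = \order {\mathcal C}_k \cdot \order {\mathcal R}_k^\nr$, and comparison with \eqref{chevalley} for $n \gg 0$ yields the norm-factor identity; the equality with $\order {\mathcal C}_{k_n}^{G_n}$ is already contained in \eqref{chevalley}.
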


\section{Filtration of ${\mathcal C}_{k_n}$ -- Class and Norm factors}\label{CNfactors}
Describe now a formal algorithm of computation of $\order {\mathcal C}_{k_n}$, 
for all $n \geq 0$, by means of ``unscrewing'' in $k_n/k$. 
For this, put $G_n := \Gal(k_n/k) =: \langle\, \sigma_n\, \rangle$.
Let $I_{k_n}$ be the group of prime-to-$p$ ideals of $k_n$.

\subsection{Filtration of the class groups}
One uses the filtration of $M_n := {\mathcal C}_{k_n}$ defined as follows \cite[Corollary 3.7]{Gra2}.
For $n \geq 0$ fixed, $(M_n^i)_{i \geq 0}$ is the $i$-sequence of sub-$G_n$-modules of $M_n$ 
defined by $M_n^0:= 1$ and $M_n^{i+1}/M_n^i := (M_n/M_n^i)^{G_n}$, for $0\leq i \leq b_n$,
where $b_n$ is the least integer $i$ such that $M_n^i = M_n$ 
(i.e., such that $M_n^{i+1} = M_n^i$). 

\smallskip
If ${\mathcal C}_k=1$, $M_0=M_0^0=1$, $b_0=0$; 
if ${\mathcal C}_k \ne 1$, $M_0=M_0^1={\mathcal C}_k$, $b_0=1$. 

\smallskip
\noindent
We will obtain, inductively, ideal groups $J_n^i \subset I_{k_n}$, 
with $J_n^0=1$, such that:
$$\hbox{$M_n^i =: \cl_{k_n}(J_n^i)$, for all $i \geq 0$}. $$

\begin{proposition} \label{filtration}
This filtration has the following properties:

\noindent
\quad (i) From $M_n^0=1$, one gets $M_n^1 = M_n^{G_n}$ of order 
$\order {\mathcal C}_k \cdot \ffrac{p^{n \cdot (\order S - 1)}}
{(E_k : E_k \cap \No_{k_n/k}(k_n^\times))}$. 

\noindent
\quad (ii) One has
$M_n^i= \{c \in M_n, \, c^{(1-\sigma_n)^i} =1 \}$, for all $i \geq 0$.

\smallskip
\noindent
\quad (iii) The $i$-sequence $\order (M_n^{i+1}/M_n^i)$, 
$0 \leq i \leq b_n$, is {\it decreasing} to $1$ and is bounded by
$\order  M_n^1$ since $1-\sigma_n$ defines the
injections $M_n^{i+1}/M_n^i \!\hookrightarrow\! M_n^i/M_n^{i-1}$.

\smallskip
\noindent
\quad (iv) $\order M_n = \order M_n^{b_n} = \prod_{i=0}^{b_n-1} \order (M_n^{i+1}/M_n^i)$.
\end{proposition}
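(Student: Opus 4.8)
The plan is to establish the four parts of Proposition \ref{filtration} by combining Chevalley's ambiguous class number formula (already recorded in \eqref{chevalley}) with elementary module-theoretic facts about the action of $G_n = \langle \sigma_n \rangle$ on the finite $p$-group $M_n = {\mathcal C}_{k_n}$, using that $G_n$ is cyclic of $p$-power order so that $\sigma_n - 1$ is topologically nilpotent on $M_n$. I would treat (ii) first, since it identifies the abstractly defined filtration with the concrete one $\{c : c^{(1-\sigma_n)^i} = 1\}$, and then (i), (iii), (iv) all follow by bookkeeping.

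For (ii) I would argue by induction on $i$. For $i=0$ both sides are trivial. Assume $M_n^i = \{c : c^{(1-\sigma_n)^i}=1\}$. By definition $M_n^{i+1}/M_n^i = (M_n/M_n^i)^{G_n}$, i.e.\ $M_n^{i+1} = \{c \in M_n : c^{1-\sigma_n} \in M_n^i\}$; applying the inductive hypothesis to the element $c^{1-\sigma_n}$ gives $c^{1-\sigma_n} \in M_n^i \iff (c^{1-\sigma_n})^{(1-\sigma_n)^i} = 1 \iff c^{(1-\sigma_n)^{i+1}} = 1$, which is exactly the claim for $i+1$. This also shows $M_n^i$ is stable under $\sigma_n$, so the quotients are genuine $G_n$-modules, and that the filtration is exhaustive: since $\sigma_n$ has $p$-power order, $(1-\sigma_n)$ is nilpotent on the finite $p$-group $M_n$, so $(1-\sigma_n)^i$ annihilates $M_n$ for $i$ large, giving $M_n^i = M_n$; the least such $i$ is $b_n$ and there $M_n^{b_n} = M_n$, equivalently $M_n^{b_n+1}=M_n^{b_n}$.

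Part (i) is the case $i=0$ of the construction: $M_n^1 = M_n^{G_n} = {\mathcal C}_{k_n}^{G_n}$, and its order is given by Chevalley's formula, which is precisely the right-hand side of \eqref{chevalley} read as $\order {\mathcal C}_k \cdot \frac{p^{n(\order S - 1)}}{(E_k : E_k \cap \No_{k_n/k}(k_n^\times))}$; this is already displayed in the excerpt. For (iii), multiplication by $1-\sigma_n$ sends $M_n^{i+1}$ into $M_n^i$ (by the characterization in (ii)) and, on passing to quotients, induces a well-defined map $M_n^{i+1}/M_n^i \to M_n^i/M_n^{i-1}$; its kernel consists of classes $c$ with $c^{1-\sigma_n} \in M_n^{i-1}$, i.e.\ $c \in M_n^i$, so the induced map is injective. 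Hence $\order(M_n^{i+1}/M_n^i) \mid \order(M_n^i/M_n^{i-1})$ for all $i \geq 1$, so the sequence of orders is decreasing and bounded by $\order M_n^1$; being a decreasing sequence of divisors of a finite $p$-group it reaches $1$, which happens at $i=b_n$. Finally (iv) is just the telescoping identity $\order M_n^{b_n} = \prod_{i=0}^{b_n-1} \order(M_n^{i+1}/M_n^i)$ together with $M_n^{b_n}=M_n$, and the compatibility $M_n^i = \cl_{k_n}(J_n^i)$ is obtained by lifting generators of each successive quotient to ideals, setting $J_n^{i+1}$ to be $J_n^i$ enlarged by these lifts. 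The only mildly delicate point is keeping the inductive construction of the ideal groups $J_n^i$ coherent with the module filtration, but since $I_{k_n} \to {\mathcal C}_{k_n}$ is surjective this is routine; no real obstacle is expected, as everything reduces to the structure of finitely generated modules over $\Z_p[G_n]$ with $G_n$ cyclic and Chevalley's formula.
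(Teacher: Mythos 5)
Your proof is correct and takes the approach one would expect; note that the paper itself gives no proof of this proposition, instead citing \cite[Corollary 3.7]{Gra2} for the filtration and \cite[Formula (29)]{Gra2} for the generalized Chevalley formula, so there is no in-paper argument to compare against. Your treatment is self-contained and supplies exactly the missing details: the inductive identification of $M_n^i$ with the kernel of $(1-\sigma_n)^i$ (using that $G_n$ is cyclic so that $(M_n/M_n^i)^{G_n}$ is the kernel of $1-\sigma_n$ on the quotient), the nilpotence of $1-\sigma_n$ on the finite $p$-group $M_n$ to ensure the filtration exhausts $M_n$ and that the first $i$ with $M_n^{i+1}=M_n^i$ satisfies $M_n^i=M_n$ (by the fixed-point theorem for $p$-groups), Chevalley's formula \eqref{chevalley} for (i), the induced injections $M_n^{i+1}/M_n^i \hookrightarrow M_n^i/M_n^{i-1}$ via $1-\sigma_n$ for (iii), and the telescoping product for (iv). The remark that the ideal groups $J_n^i$ can be built by lifting generators via surjectivity of $I_{k_n}\to{\mathcal C}_{k_n}$ is also the right way to reconcile the module filtration with the ideal-theoretic description the paper uses in the subsequent algorithm.
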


In \cite[Formula (29), \S\,3.2]{Gra2}, we established a
generalization of Chevalley's ambiguous class number formula,
by means of the norm groups
$\No_{k_n/k}(M_n^i) = \cl_k (\No_{k_n/k}(J_n^i))$ and the subgroups
$\Lambda_n^i := \{x \in k^\times,\, (x) \in \No_{k_n/k}(J_n^i)\}$ of $k^\times$, 
giving $\order  \big( M_n^{i+1} / M_n^i \big)= 
\ffrac{\order  {\mathcal C}_k}{\order \No_{k_n/k}( M_n^i)} \cdot  
\ffrac{p^{n \cdot (\order S -1)}}
{(\Lambda_n^i : \Lambda_n^i \cap \No_{k_n/k}(k_n^\times))}$, where:
\begin{equation}\label{cr}
\frac{\order  {\mathcal C}_k}{\order \No_{k_n/k}( M_n^i)}
\hspace{0.5cm} \& \hspace{0.5cm}
\frac{p^{n \cdot (\order S -1)}}{(\Lambda_n^i :  \Lambda_n^i \cap 
\No_{k_n/k}(k_n^\times))}
\end{equation}

\noindent
are integers called {\it the class factor} and {\it the norm factor}, respectively,
at the step $i$ of the algorithm in the layer $k_n$. These factors are independent 
of the choice of the ideals defining $J_n^i$ up to principal ideals of $k_n$ and the groups
$\Lambda_n^i$ are, therefore, defined up to elements of $\No_{k_n/k}(k_n^\times)$.

\smallskip
From Lemma \ref{genus} and Diagram \ref{diagram3},
we can state, for any fixed integer $n$ and for the class and norm factors \eqref{cr}:

\begin{corollary}\label{fact}
The class factors divide $\order {\mathcal C}_k$ and define a decreasing
$i$-sequence since $\No_{k_n/k}( M_n^i) \subseteq \No_{k_n/k}( M_n^{i+1})$
for all $i \geq 0$. 
The norm factors divide $\order {\mathcal R}_k^\nr$ and define a decreasing 
$i$-sequence for all $i \geq 0$, due to the injective maps 
$E_k/E_k \cap \No_{k_n/k}(k_n^\times) \! \hookrightarrow \cdots
\Lambda_n^i/\Lambda_n^i \!\cap\!  \No_{k_n/k}(k_n^\times) 
\! \hookrightarrow \Lambda_n^{i+1}/\Lambda_n^{i+1} \!\cap\! 
\No_{k_n/k}(k_n^\times) \cdots$
\end{corollary}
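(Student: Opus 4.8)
The plan is to establish each assertion of Corollary \ref{fact} by combining the generalized Chevalley formula from \cite{Gra2} with the structural facts about the genus group already proven in Lemma \ref{genus} and Theorem \ref{genus2}. There are four things to check: that the class factors divide $\order {\mathcal C}_k$, that they form a decreasing $i$-sequence, that the norm factors divide $\order {\mathcal R}_k^\nr$, and that they too form a decreasing $i$-sequence.

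First I would dispose of the class factor. By definition the class factor at step $i$ is $\order {\mathcal C}_k / \order \No_{k_n/k}(M_n^i)$, which is manifestly an integer dividing $\order {\mathcal C}_k$ since $\No_{k_n/k}(M_n^i)$ is a subgroup of ${\mathcal C}_k$. For monotonicity I would observe that $M_n^i \subseteq M_n^{i+1}$ by construction of the filtration, hence $\No_{k_n/k}(M_n^i) \subseteq \No_{k_n/k}(M_n^{i+1})$, so the denominators increase and the quotients decrease; this is the whole content of the first sentence. Next I would treat the monotonicity of the norm factor in the same spirit: the groups $\Lambda_n^i$ are increasing in $i$ (a larger $J_n^i$ produces more elements $x$ with $(x) \in \No_{k_n/k}(J_n^i)$), and the point is that $1-\sigma_n$, or equivalently transport along the filtration, yields the injective maps $\Lambda_n^i/\Lambda_n^i\cap\No_{k_n/k}(k_n^\times) \hookrightarrow \Lambda_n^{i+1}/\Lambda_n^{i+1}\cap\No_{k_n/k}(k_n^\times)$ exhibited in the statement; these injections are compatible with the genus-theoretic injections of Proposition \ref{filtration}(iii), so the indices $(\Lambda_n^i : \Lambda_n^i\cap\No_{k_n/k}(k_n^\times))$ are decreasing, hence so are the norm factors $p^{n(\order S-1)}/(\Lambda_n^i:\Lambda_n^i\cap\No_{k_n/k}(k_n^\times))$.

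The substantive point, and the main obstacle, is the divisibility of the norm factors by $\order {\mathcal R}_k^\nr$. Here I would argue as follows. For $i=0$ one has $\Lambda_n^0 = E_k$, and the norm factor is $p^{n(\order S-1)}/(E_k : E_k\cap\No_{k_n/k}(k_n^\times))$, which by Lemma \ref{genus} divides $\order {\mathcal R}_k$; but for $n\gg 0$, Theorem \ref{genus2} sharpens this to an \emph{equality} with $\order {\mathcal R}_k^\nr$, and for the fixed $n$ in the corollary the quantity divides $\order {\mathcal R}_k^\nr$ because it is the order of the genus group contribution that lives inside $\Gal(H_k^\gen/k_\infty H_k^\nr) = {\mathcal R}_k^\nr$ (Diagram \ref{diagram3}). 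Since all later norm factors ($i \geq 1$) divide the one at $i=0$ by the monotonicity just established, they all divide $\order {\mathcal R}_k^\nr$ as well. The delicate step is making precise that the genus-theoretic injection at step $0$ really lands in ${\mathcal R}_k^\nr$ and not merely in ${\mathcal R}_k$: this requires invoking the identification ${\mathcal R}_k^\nr = \Gal(H_k^\gen/k_\infty H_k^\nr)$ from the paragraph preceding Diagram \ref{diagram3} together with the fact (Theorem \ref{defhgen}) that $H_k^\gen$ is the maximal unramified extension of $k_\infty$ inside $H_k^\pr$, so that every $\cl_{k_n}$-class whose norm contributes to the norm factor corresponds to a piece of the unramified-over-$k_\infty$ part of ${\mathcal T}_k$.

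Finally I would assemble the pieces: the decreasing class-factor sequence is bounded by $\order {\mathcal C}_k$, the decreasing norm-factor sequence is bounded by $\order {\mathcal R}_k^\nr$, and both stabilize at $1$ once $i = b_n$ by the definition of $b_n$ in Proposition \ref{filtration}. I expect the routine verifications (that $\Lambda_n^i$ is well-defined up to $\No_{k_n/k}(k_n^\times)$, that the maps in question are genuinely injective $G_n$-module maps) to follow formally from \cite[\S 3.2]{Gra2}, so no new calculation is needed; the only conceptual care is the bookkeeping between ${\mathcal R}_k$, ${\mathcal R}_k^\nr$ and ${\mathcal R}_k^\ram$ in Diagram \ref{diagram3}.
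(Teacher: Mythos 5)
Your proof is correct and fills in essentially the same chain of observations the paper is invoking (the paper gives no detailed argument, merely citing Lemma \ref{genus} and Diagram \ref{diagram3}): the class-factor claims are immediate from $M_n^i \subseteq M_n^{i+1}$, the norm-factor monotonicity follows from $\Lambda_n^i \subseteq \Lambda_n^{i+1}$, and the divisibility of the norm factors by $\order{\mathcal R}_k^\nr$ rests on the inclusion $k_\infty H_{k_n/k} \subseteq H_k^\gen$ supplied by Theorem \ref{defhgen} together with the decreasing property. One small red herring: the injectivity of $\Lambda_n^i/\Lambda_n^i\cap\No_{k_n/k}(k_n^\times) \hookrightarrow \Lambda_n^{i+1}/\Lambda_n^{i+1}\cap\No_{k_n/k}(k_n^\times)$ is an elementary consequence of the inclusion $\Lambda_n^i \subseteq \Lambda_n^{i+1}$ alone and has nothing to do with the $1-\sigma_n$ injections of Proposition \ref{filtration}\,(iii), so that appeal should simply be dropped.
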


\subsection{Relation of the algorithms with Iwasawa's theory}
The subgroups $J_n^i$ of $I_{k_n}$ are built inductively from 
$J_n^0 = 1$, hence $\Lambda_n^0 = E_k$. More precisely the 
algorithm is the following, for $n$ and $i$ fixed \cite[\S\,6.2]{Gra3}:

\smallskip
Let $x \in \Lambda_n^i$, $(x) = \No_{k_n/k}({\mathfrak A})$, 
${\mathfrak A} \in J_n^i$; thus $x$ is local norm on the tame places.
Suppose that $x$ is local norm on $S$, hence 
global norm and we can write $x = \No_{k_n/k}(y)$, $y \in k_n^\times$. The random 
aspects occur, from the relation $\No_{k_n/k}(y) = \No_{k_n/k}({\mathfrak A})$, 
in the mysterious ``evolution relation'' giving the existence of an ideal 
${\mathfrak B} \in I_{k_n}$ 
such that $(y) = {\mathfrak A}\,{\mathfrak B}^{1-\sigma_n}$. Remark that for
$\No(y)=1$ and $y = b^{1-\sigma_n}$, $b$ is given by an {\it additive Hilbert's resolvent}.

\smallskip
A priori there is no algebraic link with the previous data because of the 
global solution $y$ (Hasse's norm theorem) unique up to
$k_n^\times{}^{1-\sigma_n}$; this gives ${\mathfrak B}$ up to principal ideals.
All numbers $x \in \Lambda_n^i \cap \No_{k_n/k}(k_n^\times)$ define the step $i+1$:
$$J_n^{i+1} := J_n^i \cdot \langle \ldots, {\mathfrak B}, \ldots \rangle
\ \hbox{and}\  \Lambda_n^{i+1} := \{x \in k^\times,\, (x) \in \No_{k_n/k}(J_n^{i+1})\}. $$

Therefore, for $i=b_n$ we obtain
$M_n^{b_n} = {\mathcal C}_{k_n}$, $\No_{k_n/k} \big (M_n^{b_n} \big) = {\mathcal C}_k$ and
$(\Lambda_n^{b_n}  : \Lambda_n^{b_n} \cap \No_{k_n/k}(k_n^\times)) 
= p^{n \cdot (\order S -1)}$, which explains that $\order  {\mathcal C}_{k_n}$ essentially
depends on the number of steps $b_n$ of the algorithm; this is expressed in terms 
of Iwasawa invariants as follows:

\begin{theorem} \label{O} 
We assume the Conventions \ref{conventions} for the base field $k$ and recall that
${\mathcal R}_k^\nr :=\Gal(H_k^\gen/k_\infty H_k^\nr)$ (Diagram \ref{diagram3}), 
where $H_k^\gen$ is the genus field of $k_\infty/k$ (Theorem \ref{defhgen}).
Let $b_n$ be the length of the algorithm in the layer $k_n$. Then (where $v_p$ 
denotes the $p$-adic valuation):

\smallskip
\noindent
\quad (i) $b_n \leq \lambda \cdot n + \mu \cdot p^n + \nu \leq 
v_p(\order {\mathcal C}_k \cdot \order {\mathcal R}_k^\nr) \cdot b_n$, for all $n \geq 0$.
So,  $\lambda=\mu= 0$ $\Longleftrightarrow$ $b_n$ bounded.

\smallskip
\noindent
\quad (ii) $b_ m \geq b_n$, for all $m \geq n \geq 0$.

\smallskip
\noindent
\quad (iii) $b_1=0$ $\Longleftrightarrow$ $\lambda=\mu=\nu= 0$
$\Longleftrightarrow$  $b_n=0$ for all $n$ $\Longleftrightarrow$
${\mathcal C}_k = {\mathcal R}_k^\nr =1$.
\end{theorem}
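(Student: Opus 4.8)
The plan is to organize the proof around the two factored quantities of Corollary \ref{fact}: the class factor dividing $\order {\mathcal C}_k$ and the norm factor dividing $\order {\mathcal R}_k^\nr$. First I would establish part (i) by a two-sided estimate. For the lower bound $b_n \leq \lambda n + \mu p^n + \nu$, I would use Proposition \ref{filtration}(iv), which writes $\order {\mathcal C}_{k_n}$ as the product of the $b_n$ successive quotients $\order(M_n^{i+1}/M_n^i)$, each of which is $\geq p$ for $0 \leq i \leq b_n-1$ by definition of $b_n$; hence $p^{b_n} \mid \order {\mathcal C}_{k_n}$, and Iwasawa's formula (valid under Conventions \ref{conventions}) gives $v_p(\order {\mathcal C}_{k_n}) = \lambda n + \mu p^n + \nu$ for all $n$, yielding the inequality. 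For the upper bound $\lambda n + \mu p^n + \nu \leq v_p(\order {\mathcal C}_k \cdot \order {\mathcal R}_k^\nr)\cdot b_n$, the point is that each quotient $\order(M_n^{i+1}/M_n^i)$ is the product of a class factor and a norm factor (formula \eqref{cr}), so $v_p$ of each quotient is at most $v_p(\order {\mathcal C}_k) + v_p(\order {\mathcal R}_k^\nr)$ by Corollary \ref{fact}; multiplying over the $b_n$ steps and invoking Proposition \ref{filtration}(iv) again with Iwasawa's formula gives the claim. The displayed equivalence ``$\lambda = \mu = 0 \Longleftrightarrow b_n$ bounded'' is then immediate: if $\lambda = \mu = 0$ the left inequality bounds $b_n$ by $\nu$; conversely if $b_n \leq B$ for all $n$ then $\lambda n + \mu p^n + \nu \leq v_p(\order {\mathcal C}_k \cdot \order {\mathcal R}_k^\nr)\cdot B$ for all $n$, forcing $\lambda = \mu = 0$.

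For part (ii), the monotonicity $b_m \geq b_n$ for $m \geq n$, I would compare the filtrations at levels $k_n$ and $k_m$. The natural tool is the extension (conorm) map $\mathcal{C}_{k_n} \to \mathcal{C}_{k_m}$, or rather a comparison of the genus-theoretic descriptions: since $G_n$ is a quotient of $G_m = \Gal(k_m/k)$, the characterization in Proposition \ref{filtration}(ii), $M_n^i = \{c : c^{(1-\sigma_n)^i} = 1\}$, should transport so that reaching the full class group in $\leq b_n$ steps at level $n$ cannot happen faster at a higher level — intuitively, more of the class group sits ``deeper'' in the $\sigma$-filtration at $k_m$. Concretely I would argue that the image of $\mathcal{C}_{k_n}$ in $\mathcal{C}_{k_m}$ is stable under the filtration and that $b_n$ steps suffice to exhaust that image, while the extra part of $\mathcal{C}_{k_m}$ requires at least as many; alternatively, one can read off monotonicity from part (i) together with Iwasawa growth, since $b_n$ is squeezed between two increasing-in-$n$ quantities, but that only gives eventual monotonicity, so the filtration-comparison argument is what I would actually write.

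Part (iii) is then a bookkeeping corollary. The chain $b_1 = 0 \Rightarrow \lambda = \mu = \nu = 0$: if $b_1 = 0$ then $\mathcal{C}_{k_1} = 1$, so $v_p(\order \mathcal{C}_{k_1}) = \lambda + \mu p + \nu = 0$, and since $\lambda, \mu, \nu \geq 0$ all vanish. Then $\lambda = \mu = \nu = 0 \Rightarrow b_n = 0$ for all $n$ follows from part (i)'s left inequality ($b_n \leq 0$). Then $b_n = 0$ for all $n$ trivially gives $b_1 = 0$, closing the first cycle; and $b_n = 0$ for all $n$ means $\mathcal{C}_{k_n} = 1$ for all $n$, in particular $\mathcal{C}_k = 1$, whence by Theorem \ref{genus2} ($\order {\mathcal G}_k = \order {\mathcal C}_k \cdot \order {\mathcal R}_k^\nr$) and the fact that the genus group is trivial when all $\mathcal{C}_{k_n}$ are, we get $\mathcal{R}_k^\nr = 1$; conversely $\mathcal{C}_k = \mathcal{R}_k^\nr = 1$ forces $\nu = 0$ and, via the right inequality of part (i), $\lambda n + \mu p^n + \nu \leq 0$, so $\lambda = \mu = \nu = 0$. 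The main obstacle I anticipate is part (ii): making the filtration-comparison between different layers genuinely rigorous requires care about how the ideal groups $J_n^i$ and the representative groups $\Lambda_n^i$ behave under the conorm, and about the non-canonical ``evolution relation'' that defines the inductive step; everything else reduces to applying Iwasawa's formula to the factorization already set up in Section \ref{CNfactors}.
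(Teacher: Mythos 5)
Parts (i) and (iii) essentially reproduce the paper's argument: for (i) you use $\order M_n^{i+1}/M_n^i \geq p$ together with Proposition \ref{filtration}(iv) and Iwasawa's formula to get the lower bound, and $\order M_n^{i+1}/M_n^i \mid \order {\mathcal C}_k \cdot \order {\mathcal R}_k^\nr$ from Corollary \ref{fact} for the upper bound; for (iii) you use the left inequality of (i) plus Theorem \ref{genus2} applied at a high enough layer. One small caveat in your (iii): you assert ``$\lambda,\mu,\nu \geq 0$, so all vanish,'' but $\nu$ need not itself be nonnegative in general; the clean route (which is what the paper does) is to first get $b_0 = 0$ from $b_1 = 0$ by norm surjectivity, so that $\mu + \nu = 0$ and $\lambda + \mu p + \nu = 0$, then subtract. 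Under the Conventions one does have $\order {\mathcal C}_k = p^{\mu + \nu}$ so $\mu + \nu \geq 0$, but that still doesn't give $\nu \geq 0$ directly, so the subtraction is what you actually need.

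The genuine gap is in (ii). You propose the conorm (extension) map ${\mathcal C}_{k_n} \to {\mathcal C}_{k_m}$, but that goes the wrong direction: it need not be injective, its interaction with the $(1-\sigma)$-filtration does not give a bound on $b_m$ from below by $b_n$, and you yourself flag the argument as only heuristic. The argument that actually works uses the norm map $\No_{k_m/k_n} : M_m \to M_n$, which is \emph{surjective} because $k_m/k_n$ is totally ramified, and which is compatible with the filtration: $\sigma_m$ restricts to $\sigma_n$, so the $G$-equivariance of the norm gives $\No_{k_m/k_n}\bigl(c^{(1-\sigma_m)^{j}}\bigr) = \No_{k_m/k_n}(c)^{(1-\sigma_n)^{j}}$, and hence by Proposition \ref{filtration}(ii), $\No_{k_m/k_n}(M_m^{b_m}) \subseteq M_n^{b_m}$. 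Combining with surjectivity: $M_n = \No_{k_m/k_n}(M_m) = \No_{k_m/k_n}(M_m^{b_m}) \subseteq M_n^{b_m}$, so $M_n^{b_m} = M_n$ and minimality of $b_n$ gives $b_n \leq b_m$. This is short and unconditional; the conorm route does not close.
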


\begin{proof} Let $M_n := {\mathcal C}_{k_n}$, for all $n \geq 0$.

\smallskip
(i) As $\order \big( M_n^{i+1}/M_n^i\big) \geq p$, for $0 \leq i \leq b_n-1$,
Proposition \ref{filtration}\,(iv) implies
$\order M_n = \order M_n^{b_n} \geq p^{b_n}$; whence
$b_n \leq \lambda \cdot n + \mu \cdot p^n +  \nu$.

\smallskip
From the fact that
$\order \big( M_n^{i+1}/M_n^i \big) \mid \order {\mathcal C}_k \cdot \order {\mathcal R}_k^\nr$
(Corollary \ref{fact}) this yields
$\order  \big( M_n^{i+1}/M_n^i \big) \leq \order {\mathcal C}_k \cdot \order {\mathcal R}_k^\nr$ 
for $0 \leq i\leq b_n-1$, whence $\order {\mathcal C}_{k_n} \leq 
(\order {\mathcal C}_k \cdot \order {\mathcal R}_k^\nr)^{b_n}$ 
from Proposition~\ref{filtration}\,(iv); hence the second inequality and the second claim.

\smallskip
(ii) By definition, $M_m^{b_m} = M_m$ with $b_m$ minimal.
Since $k_m/k_n$ is totally ramified, $\No_{k_m/k_n}(M_m^{b_m})=M_n$,
but $\No_{k_m/k_n}(M_m^{b_m}) \subseteq M_n^{b_m}$ (Proposition \ref{filtration}\,(ii)),
whence $M_n \subseteq M_n^{b_m}$, thus $M_n^{b_m} = M_n$, proving the claim.

\smallskip
(iii)  So $b_1=0$ implies $b_0=0$, whence $\lambda+\mu p + \nu = \mu+ \nu=0$
yielding $\lambda=\mu=0$ and $\nu=0$; then (i) implies $b_n=0$
for all $n \geq 0$, in other words, ${\mathcal C}_{k_n}=1$ for 
all $n \geq 0$; thus, taking $n \gg 0$ to apply Theorem \ref{genus2} yields 
${\mathcal G}_{k_n/k} = {\mathcal C}_{k_n}^{G_n} = 1$, whence 
${\mathcal C}_k = {\mathcal R}_k^\nr =1$ (reciprocals obvious).
\end{proof}

\begin{corollary}\label{bn=1}
(a) Under Conventions \ref{conventions},
$\lambda = \mu =0$ (equivalent to $b_n$ bounded) is
equivalent to each of the following properties:

\smallskip
\quad (i) $\No_{k_n/k} : {\mathcal C}_{k_n} \to {\mathcal C}_k$ 
is an isomorphisms for all $n \geq 0$.

\smallskip
\quad (ii) $\order {\mathcal C}_{k_n}^{G_n} = \order {\mathcal C}_{k_n} 
= \order {\mathcal C}_k$, for all $n \geq 0$.

\smallskip
\quad (iii) ${\mathcal C}_{k_n}^{G_n} = {\mathcal C}_{k_n}$, for all $n \geq 0$ 
and ${\mathcal R}_k^\nr =1$.

\smallskip
(b) Let $k_{n_1}$, still denoted $k$, be such that $b_n$ is constant for all
$n \geq n_1$;\,\footnote{\,On must note that for each change of base field
in the tower, the Iwasawa invariants are given by Conventions \ref{conventions}, 
and the algorithms are distinct; for instance the parameter $b_n$ defines a new 
function of the $n$th layer of the new $k$ (in the meaning $[k_n : k] = p^n$).} 
for this new base field $k$ and the new $b$-function, $b_n \leq 1$, for all $n \geq 0$. 
\end{corollary}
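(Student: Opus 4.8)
The plan is to treat (a) first, establishing the chain of equivalences with $\lambda=\mu=0$, and then to deduce (b) as a direct consequence of Theorem \ref{O}\,(i)--(ii). For part (a), recall from Theorem \ref{O}\,(i) that $\lambda=\mu=0$ is equivalent to $b_n$ being bounded. The key observation is that, once the base field has been chosen as in Conventions \ref{conventions} and $n$ is large enough for Theorem \ref{genus2} to apply, we have $\order\No_{k_n/k}({\mathcal C}_{k_n}) = \order {\mathcal C}_k$ (since $k_n/k$ is totally ramified, so the norm is surjective on class groups up to the genus contribution, which here collapses), so the ``class factor'' at step $0$ is already trivial; what remains is to show that boundedness of $b_n$ forces the norm factor to be trivial at every step and for every $n$, which by Corollary \ref{fact} and Theorem \ref{genus2} amounts to ${\mathcal R}_k^\nr = 1$ after passing high enough in the tower. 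First I would show $\lambda=\mu=0 \Rightarrow$ (i): boundedness of $b_n$ together with Theorem \ref{O}\,(ii) ($b_n$ nondecreasing) means $b_n$ is eventually constant, and by Theorem \ref{genus2} applied to a high enough layer (which does not change $\lambda$, $\mu$ by Conventions \ref{conventions}) one gets $\order {\mathcal G}_k = \order {\mathcal C}_k$, i.e. $\order {\mathcal R}_k^\nr = 1$; then Iwasawa's formula with $\lambda=\mu=0$ gives $\order {\mathcal C}_{k_n} = p^\nu$ constant, and since $\No_{k_n/k}$ is surjective (total ramification) between groups of equal finite order it is an isomorphism. The implications (i) $\Rightarrow$ (ii) $\Rightarrow$ (iii) are then essentially bookkeeping: (i) gives $\order {\mathcal C}_{k_n} = \order {\mathcal C}_k$; for (ii) $\Rightarrow$ (iii) note $\order {\mathcal C}_{k_n}^{G_n} = \order {\mathcal C}_{k_n}$ forces ${\mathcal C}_{k_n}^{G_n} = {\mathcal C}_{k_n}$, and then $\order {\mathcal C}_{k_n}^{G_n} = \order {\mathcal C}_k \cdot \order {\mathcal R}_k^\nr$ (Theorem \ref{genus2}, for $n\gg 0$) together with $\order {\mathcal C}_{k_n} = \order {\mathcal C}_k$ gives $\order {\mathcal R}_k^\nr = 1$. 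Finally (iii) $\Rightarrow \lambda=\mu=0$: if ${\mathcal C}_{k_n}^{G_n} = {\mathcal C}_{k_n}$ for all $n$, then $M_n^1 = M_n$, i.e. $b_n \leq 1$ for all $n$, so $b_n$ is bounded, whence $\lambda=\mu=0$ by Theorem \ref{O}\,(i).

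For part (b), after replacing $k$ by $k_{n_1}$ so that $b_n$ is constant (call this common value $b$), I would argue that $b \leq 1$. Suppose $b \geq 2$. Reading off Theorem \ref{O}\,(i) for the new base field, $b$ bounded gives $\lambda = \mu = 0$, so by part (a)(iii) we have ${\mathcal C}_{k_n}^{G_n} = {\mathcal C}_{k_n}$ for all $n \geq 0$ (with respect to the new $k$ and its new layers $k_n$, $[k_n:k]=p^n$); but ${\mathcal C}_{k_n}^{G_n} = {\mathcal C}_{k_n}$ means precisely $M_n^1 = M_n$, i.e. $b_n \leq 1$, contradicting $b_n = b \geq 2$. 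Hence $b \leq 1$.

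The main obstacle I anticipate is the careful handling of the ``change of base field'' bookkeeping: each time one passes to a higher layer $k_{n_0}$ to invoke Theorem \ref{genus2}, the Iwasawa invariants transform according to Conventions \ref{conventions} ($\lambda$ unchanged, $\mu$ scaled, $\nu$ shifted) and the algorithm — hence the function $n \mapsto b_n$ — is redefined relative to the new tower, as flagged in the footnote to the statement. One must be sure that the equivalences in (a) are genuinely invariant under this operation (they are, since they are phrased in terms of $\lambda,\mu$ and the isomorphism-type statements about all layers simultaneously), and that in (b) the constant value $b$ of the $b$-function is literally the same object before and after one invokes Theorem \ref{genus2} — this is why the reduction to $\lambda=\mu=0$ via part (a) is the cleanest route, since it sidesteps having to track the genus field through the base change. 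The remaining steps are routine applications of total ramification (surjectivity of norms on class groups), finiteness (a surjection of finite groups of equal order is an isomorphism), and the already-established formulas \eqref{chevalley}, Theorem \ref{genus2}, and Proposition \ref{filtration}.
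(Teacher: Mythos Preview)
Your argument is essentially correct and close to the paper's. One wrinkle: in your $\lambda=\mu=0 \Rightarrow$ (i) step, the assertion that Theorem \ref{genus2} ``applied to a high enough layer'' yields ${\mathcal R}_k^{\nr}=1$ is not justified at that point --- Theorem \ref{genus2} only gives $\order {\mathcal G}_k = \order {\mathcal C}_k \cdot \order {\mathcal R}_k^{\nr}$, with no reason yet for the second factor to collapse. Fortunately you never use this claim: your actual proof of (i) (Iwasawa gives $\order{\mathcal C}_{k_n}=p^\nu$ constant; a surjective map of finite groups of equal order is an isomorphism) is exactly the paper's argument and stands on its own; the vanishing of ${\mathcal R}_k^{\nr}$ is correctly derived later, in your (ii)$\Rightarrow$(iii), via $\order{\mathcal C}_{k_n}^{G_n}=\order{\mathcal C}_k\cdot\order{\mathcal R}_k^{\nr}$ combined with $\order{\mathcal C}_{k_n}^{G_n}=\order{\mathcal C}_k$. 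For part (b) your route is a shade more direct than the paper's: the paper verifies that both the class factor and the norm factor for $M_n^2/M_n^1$ vanish (invoking (i),(ii) and (iii) respectively) to conclude $M_n^2=M_n^1$, whereas you go straight from (a)(iii), namely ${\mathcal C}_{k_n}^{G_n}={\mathcal C}_{k_n}$, to $M_n^1=M_n$ and hence $b_n\leq 1$ --- cleaner, and the contradiction framing is superfluous. One bookkeeping point to tighten: your phrase ``call this common value $b$'' conflates the old and new $b$-functions (the hypothesis is that the \emph{old} $b_n$ stabilizes; the conclusion concerns the \emph{new} one, which is a different algorithm). The clean path, which you yourself flag in your obstacle paragraph, is: old $b_n$ bounded $\Rightarrow$ old $\lambda=\mu=0$ $\Rightarrow$ new $\lambda=\mu=0$ (Conventions \ref{conventions}) $\Rightarrow$ (a)(iii) for the new base field $\Rightarrow$ new $b_n\leq 1$ for all $n$.
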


\begin{proof}
Proof of (a).
(i) Under the condition $\lambda = \mu = 0$, 
$\order {\mathcal C}_{k_n} = \order {\mathcal C}_k = p^\nu$ 
for all $n$, and all the (surjective) norm maps are isomorphisms.

(ii) Chevalley's formula
$\order {\mathcal C}_{k_n}^{G_n} = \order {\mathcal C}_k \cdot 
\frac{p^{n \cdot (\order S -1)}}{(E_k : E_k \cap \No_{k_n/k}(k_n^\times))} 
\leq \order {\mathcal C}_{k_n} = \order {\mathcal C}_k$ yields 
$\order {\mathcal C}_{k_n}^{G_n} = \order {\mathcal C}_{k_n} = \order  {\mathcal C}_k$ 
and $\frac{p^{n \cdot (\order S -1)}}{(E_k : E_k \cap \No_{k_n/k}(k_n^\times))}=1$,
for all $n \geq 0$. 

(iii) From (ii), ${\mathcal R}_k^\nr =1$, taking $n \gg 0$ to apply Theorem \ref{genus2}.

\smallskip
\noindent
In the three cases, the reciprocals are obvious.

\smallskip
Proof of (b). 
Consider the second step of the algorithm in $k_n$ 
(we exclude the case $b_n=0$ where all class 
groups are trivial); the class factor for ${\mathcal C}_{k_n}^2/{\mathcal C}_{k_n}^1$ 
is trivial since $\No_{k_n/k} ({\mathcal C}_{k_n}^{G_n}) = {\mathcal C}_k$
(from (i), (ii)) and the norm factor, as divisor of ${\mathcal R}_k^\nr$, is also trivial
(from (iii)); whence $b_n = 1$ for all $n \geq 0$. 
\end{proof}

Note that under Greenberg's conjecture, in $\frac{p^{n \cdot (\order S -1)}}
{(\Lambda_n^1 : \Lambda_n^1\cap \No_{k_n/k}(k_n^\times))}$, we have
$\Lambda_n^1 = \{x \in k^\times, \ (x) = \No_{k_n/k}({\mathfrak A}),\, 
{\mathfrak A} \in J_n^1\}$ where $\cl_{k_n}(J_n^1) = {\mathcal C}_{k_n}^{G_n}$;
thus, norms being isomorphisms, $(x) = \No_{k_n/k}({\mathfrak A})$ implies
that ${\mathfrak A}=(\alpha)$, $\alpha \in k_n^\times$, so that
$\Lambda_n^1=E_k\,\No_{k_n/k}(k_n^\times)$, showing that the 
algorithm becomes trivial.

\subsection{The $n$-sequences $({\mathcal C}_{k_n} / {\mathcal C}_{k_n}^i)^{G_n}$}

We fix the step $i$ of the algorithms. For now, we do not assume the 
Conventions \ref{conventions}. For all $m \geq n \geq 0$, the norm 
maps $\No_{k_m/k_n}$ on $M_m$ and $M_m^{(1-\sigma_m)^i}$ are surjective
(they are, a priori, not injective nor surjective on the kernels $M_m^i$ of
the maps $M_m \to M_m^{(1-\sigma_m)^i}$).
This leads to the following result (see \cite[Lemmas 7.1, 7.2]{Gra3}
for the details), giving another approach of the conjecture:

\begin{theorem}\label{thm67}
For all $i \geq 0$ fixed, $\big\{ \order \big( M_n^{i+1} / M_n^i \big) \big\}_n$ 
defines an increasing $n$-sequence of divisors of
$\order {\mathcal C}_k \cdot \order {\mathcal R}_k^\nr$.
Thus $\ds \lim_{n \to \infty} \order  \big( M_n^{i+1} / M_n^i \big) =:
p^{c^i}  p^{\rho^i}$.
The $i$-sequences $p^{c^i}$ and $ p^{\rho^i}$ are decreasing, 
stationary at a divisor $p^c$ of $\order {\mathcal C}_k$ and
$p^\rho$ of $\order {\mathcal R}_k^\nr$, respectively. 
Greenberg's conjecture is equivalent to $c = \rho =0$.
\end{theorem}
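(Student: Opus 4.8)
The plan is to argue entirely with the class and norm factors introduced in \eqref{cr}, exploiting the two monotonicity phenomena already established: the $i$-sequence of factors is decreasing (Corollary \ref{fact}) and the $n$-sequence of the quantities $\order(M_n^{i+1}/M_n^i)$ is increasing with limit $p^{c^i} p^{\rho^i}$, the first factor dividing $\order{\mathcal C}_k$ and the second dividing $\order{\mathcal R}_k^\nr$. First I would justify the splitting of the limit itself: for fixed $i$, the class factor $\frac{\order{\mathcal C}_k}{\order\No_{k_n/k}(M_n^i)}$ takes values among the finitely many divisors of $\order{\mathcal C}_k$ and, because $\No_{k_n/k}(M_n^i)\subseteq\No_{k_n/k}(M_{n'}^i)$ propagates correctly under the compatible norm maps (the key input being surjectivity of $\No_{k_m/k_n}$ on $M_m$ and on $M_m^{(1-\sigma_m)^i}$, cited from \cite[Lemmas 7.1, 7.2]{Gra3}), it is eventually constant equal to some $p^{c^i}$; likewise the norm factor $\frac{p^{n\cdot(\order S-1)}}{(\Lambda_n^i:\Lambda_n^i\cap\No_{k_n/k}(k_n^\times))}$ divides $\order{\mathcal R}_k^\nr$ by Corollary \ref{fact} and stabilizes at some $p^{\rho^i}$. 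Their product is then the announced limit $p^{c^i}p^{\rho^i}$.

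Next I would establish that the $i$-sequences $p^{c^i}$ and $p^{\rho^i}$ are themselves decreasing. For this one passes to the limit in $n$ inside the injections of Proposition \ref{filtration}\,(iii) and Corollary \ref{fact}: the map $1-\sigma_n$ induces $M_n^{i+1}/M_n^i\hookrightarrow M_n^i/M_n^{i-1}$ compatibly in $n$, and the inclusions $\Lambda_n^i/\Lambda_n^i\cap\No_{k_n/k}(k_n^\times)\hookrightarrow\Lambda_n^{i+1}/\Lambda_n^{i+1}\cap\No_{k_n/k}(k_n^\times)$ together with $\No_{k_n/k}(M_n^i)\subseteq\No_{k_n/k}(M_n^{i+1})$ force, after taking $n\to\infty$, $p^{c^{i+1}}\mid p^{c^i}$ and $p^{\rho^{i+1}}\mid p^{\rho^i}$. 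A decreasing sequence of powers of $p$ is eventually constant, giving the stationary values $p^c\mid\order{\mathcal C}_k$ and $p^\rho\mid\order{\mathcal R}_k^\nr$.

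Finally I would prove the equivalence with Greenberg's conjecture. If $c=\rho=0$ then for each fixed $i$ one has $\order(M_n^{i+1}/M_n^i)\to p^{c^i}p^{\rho^i}$, and since the $c^i,\rho^i$ are nonincreasing and vanish in the limit, they equal $0$ for all sufficiently large $i$; but then the decreasing $i$-sequence of factors in layer $k_n$ reaches $1$ at a step $b_n$ that is bounded independently of $n$ (uniformly by the first index where both $c^i$ and $\rho^i$ vanish), hence $\lambda=\mu=0$ by Theorem \ref{O}\,(i). Conversely, if $\lambda=\mu=0$ then after Conventions \ref{conventions} the orders $\order{\mathcal C}_{k_n}$ are bounded, so $b_n$ is bounded; combined with Corollary \ref{bn=1}\,(a)(iii), which after a shift of base field gives ${\mathcal R}_k^\nr=1$ and $\No_{k_n/k}$ an isomorphism, both the stationary class factor $p^c$ and the stationary norm factor $p^\rho$ are forced to be trivial, i.e.\ $c=\rho=0$. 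The main obstacle I anticipate is the first step: making precise and rigorous the compatibility of the norm maps $\No_{k_m/k_n}$ with the filtrations $(M_n^i)_i$ — in particular the surjectivity statements on $M_m^{(1-\sigma_m)^i}$ and the resulting inclusions of norm images — so that "passing to the limit in $n$" is legitimate; this is exactly the content deferred to \cite[Lemmas 7.1, 7.2]{Gra3} and everything else is a clean monotonicity argument on divisors of the fixed integer $\order{\mathcal C}_k\cdot\order{\mathcal R}_k^\nr$.
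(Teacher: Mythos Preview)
Your proposal is correct and follows the same line as the paper, which itself gives no proof beyond citing \cite[Lemmas 7.1, 7.2]{Gra3} for the surjectivity of $\No_{k_m/k_n}$ on $M_m$ and on $M_m^{(1-\sigma_m)^i}$---precisely the input you single out as the main obstacle. One simplification for the converse direction: once $\lambda=\mu=0$ gives $b_n$ bounded (say by $b$), there is no need to shift the base field or invoke Corollary~\ref{bn=1}; for every $i\geq b$ one has $M_n^{i+1}/M_n^i=1$ for all $n$, so the increasing $n$-sequence has limit $1$, giving $c^i=\rho^i=0$ and hence $c=\rho=0$ directly. Your route through a base-field change is not wrong under Conventions~\ref{conventions}, but the phrase ``after a shift of base field'' risks confusion since $c$ and $\rho$ are attached to the chosen $k$.
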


\section{${\mathcal T}_k$ as governing invariant of the algorithms}\label{sec4}

The ideals ${\mathfrak A} \in J_n^i$
may be arbitrarily modified up to principal ideals of $k_n$, whence
$\No_{k_n/k}({\mathfrak A})$ defined up to elements of
$\No_{k_n/k}(k_n^\times)$, as well as $ \Lambda_n^i$. 
We intend to obtain suitable {\it finite sets} of representatives of 
these ideal norms, independently of $n$, more precisely of cardinality 
$\leq \order {\mathcal T}_k$. 

\subsection{Decomposition of $\No_{k_n/k}({\mathfrak A})$ -- 
The fundamental ideals ${\mathfrak t}$}

Let $H_k^\pr$ and $H_{k_n}^\pr$ be the maximal abelian $p$-ramified 
pro-$p$-extensions of $k$ and $k_n$, respectively. 
Let $F$ be an extension of $H_k^\nr$
such that $H_k^\pr$ be the direct compositum 
of $F$ and $k_\infty H_k^\nr$ over $H_k^\nr$ (possible 
because $k_\infty \cap H_k^\nr = k$ due to the total ramification of $p$
in $k_\infty/k$); we~put $\Gamma = \Gal(H_k^\pr/F) \simeq \Z_p$. 

\smallskip
In the same way, we fix an extension $F_n$ 
of $F$ such that $H_{k_n}^\pr$ be the direct compositum 
of $F_n$ and $H_k^\pr$ over $k_n F$; we put $\Gamma_n
= \Gal(H_{k_n}^\pr/F_n) \simeq \Gamma^{p^n}$. We have
$F=F_0 \subset F_1 \subset \cdots \subset F_n \subset F_{n+1}\subset \cdots$
(see Diagram \ref{diagram4} hereafter).

\smallskip
In what follows, we systematically use the flatness of $\Z_p$.

Consider the Artin symbols $\Big( \frac{H_k^\pr/k}{\cdot} \Big)$ and 
$\Big( \frac{H_{k_n}^\pr/k_n}{\cdot} \Big)$,
defined on $I_k \otimes \Z_p$ and $I_{k_n} \otimes \Z_p$, respectively.
Their images are the Galois groups ${\mathcal A}_k$ 
(resp. ${\mathcal A}_{k_n}$); their kernels are
the groups of infinitesimal principal ideals
${\mathcal P}_{k, \infty}$ (resp. ${\mathcal P}_{k_n, \infty}$), 
where ${\mathcal P}_{k, \infty}$ is the set of ideals 
$(x_\infty)$, $x_\infty \in k'^\times \otimes \Z_p$, such that 
$\iota x_\infty = 1$ in $U_k$ (idem for 
${\mathcal P}_{k_n, \infty}$) \cite[Theorem III.2.4, Proposition III.2.4.1]{Gra1}. 

\smallskip
The arithmetic norm (or restriction of automorphisms), in $k_n/k$, 
leads to $\No_{k_n/k}({\mathcal A}_{k_n}) = \Gal(H_k^\pr/k_n)$ and
$\No_{k_n/k}({\mathcal T}_{k_n}) = {\mathcal T}_k$ 
since $k_n{}_\infty=k_\infty$.
The fixed points formula ${\mathcal T}_{k_n}^{G_n} \simeq {\mathcal T}_k$
(\cite[Theorem IV.3.3]{Gra1}, \cite[Section 2 (c)]{J0}), implies
$\Ker(\No_{k_n/k}) = {\mathcal T}_{k_n}^{1-\sigma_n} = \Gal(H_{k_n}^\pr/H_k^\pr)$.

\smallskip
We denote by ${\mathcal K}^\times_\infty \subset k'^\times \otimes \Z_p$ the 
subgroup of infinitesimal elements of $k$ 
(idem for ${\mathcal K}^\times_{n,\infty} \subset  k_n'^\times \otimes \Z_p$).
In the sequel, the notations $x_\infty$, $y_\infty$,\,$\ldots$ always denote
such infinitesimal elements. 
\begin{diagram}\label{diagram4}
\unitlength=0.91cm 
$$\vbox{\hbox{\hspace{-4.0cm}\vspace{-0.2cm}
\begin{picture}(11.5,5.6)
\put(6.3,4.50){\line(1,0){1.4}}
\put(6.35,2.50){\line(1,0){1.1}}
\put(6.0,0.45){\line(1,0){1.7}}
\put(8.4,4.50){\line(1,0){0.6}}
\put(9.05,4.4){$H_k^\pr$}
\put(9.5,4.50){\line(1,0){1.3}}
\put(3.7,4.50){\line(1,0){1.5}}
\put(3.7,2.50){\line(1,0){1.6}}
\put(3.7,0.45){\line(1,0){1.6}}
\put(8.5,4.25){\ft ${\mathcal W}_k$}
\bezier{450}(3.6,4.8)(7.5,6.6)(10.8,4.8)
\put(6.8,5.8){\ft $ {\mathcal T}_{k_n} = 
\tor_{\Z_p}({\mathcal A}_{k_n})$}
\bezier{350}(3.8,4.65)(6.35,5.55)(8.95,4.65)
\put(6.0,5.2){\ft ${\mathcal T}_k = 
\tor_{\Z_p}({\mathcal A}_k)$}
\put(8.4,0.45){\line(1,0){0.6}}
\put(9.1,0.4){$F$}
\put(3.50,2.7){\line(0,1){1.55}}
\put(3.50,0.7){\line(0,1){1.55}}
\put(5.7,2.7){\line(0,1){1.55}}
\put(5.7,0.7){\line(0,1){1.55}}
\put(8.0,0.7){\line(0,1){1.55}}
\put(8.0,2.7){\line(0,1){1.55}}
\put(9.28,0.7){\line(0,1){1.55}}
\put(9.28,2.7){\line(0,1){1.6}}
\put(9.1,2.4){$k_nF$}
\put(8.4,2.5){\line(1,0){0.6}}
\put(11.28,2.7){\line(0,1){1.6}}
\put(11.15,2.4){$F_n$}
\put(9.8,2.5){\line(1,0){1.2}}
\put(4.4,4.25){\ft ${\mathcal C}_k$}
\put(6.8,4.25){\ft ${\mathcal R}_k$}
\bezier{400}(3.8,2.56)(8.0,3.0)(11.0,4.3)
\put(9.4,3.55){\ft ${\mathcal A}_{k_n}$}%
\bezier{400}(3.8,0.55)(6.8,0.8)(9.2,4.3)
\put(6.7,1.5){\ft ${\mathcal A}_k$}
\bezier{400}(3.8,2.6)(8.4,3.5)(9.15,4.3)
\put(6.5,3.55){\ft $\No{\mathcal A}_{k_n}$}%
\put(10.85,4.4){$H_{k_n}^\pr$}
\put(5.2,4.4){$k_\infty H_k^\nr$}
\put(7.8,4.4){$H_k^\bp$}
\put(3.3,4.4){$k_\infty$}
\put(5.3,2.4){$k_n H_k^\nr$}
\put(7.55,2.4){$k_nF^\bp$}
\put(5.35,0.4){$H_k^\nr$}
\put(7.8,0.4){$F^\bp$}
\put(3.4,2.4){$k_n$}
\put(3.4,0.40){$k$}
\put(2.65,1.5){$G_n$}
\bezier{200}(3.3,0.7)(3.0,1.4)(3.3,2.3)
\put(3.6,1.5){\ft  $p^n$}
\bezier{400}(11.6,2.5)(12.2,3.45)(11.5,4.4)
\put(12.0,3.35){$\Gamma_n$}
\bezier{400}(5.95,0.55)(8.25,1.5)(9.25,4.3)
\put(6.8,0.8){\ft $U_k\!/\!\ov E_k$}
\bezier{400}(9.6,0.5)(11.3,2.55)(9.6,4.4)
\put(10.5,2.7){$\Gamma$}
\end{picture} }} $$
\end{diagram}
\unitlength=1.0cm

\begin{lemma}\label{Ninfty} 
If $(x_\infty) \in {\mathcal P}_{k, \infty} \cap \No_{k_n/k}(I_{k_n} \otimes \Z_p)$,
then $x_\infty \in \No_{k_n/k}({\mathcal K}^\times_{n,\infty})$.
\end{lemma}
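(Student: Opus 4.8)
The plan is to turn the hypothesis into a diagram chase between the Artin exact sequences over $k$ and over $k_n$, using the identification of $\Gal(H_{k_n}^\pr/H_k^\pr)$ with ${\mathcal T}_{k_n}^{1-\sigma_n}$ recalled above. The first step is to record that, under Leopoldt's conjecture, the map $x_\infty \mapsto (x_\infty)$ identifies ${\mathcal K}^\times_\infty$ with ${\mathcal P}_{k,\infty}$: it is surjective by the very definition of ${\mathcal P}_{k,\infty}$, and it is injective because its kernel is the group of infinitesimal units (units $\varepsilon$ with $\iota\varepsilon = 1$ in $U_k$); since $k$ is totally real its torsion units reduce to $\pm 1$ and $-1$ is not infinitesimal, so Leopoldt's conjecture forces that kernel to be trivial. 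Hence we have the exact sequence $1 \to {\mathcal K}^\times_\infty \to I_k \otimes \Z_p \to {\mathcal A}_k \to 1$ (the last map being the Artin symbol), and the analogous one over $k_n$; these are compatible with $\No_{k_n/k}$ on all three terms, by functoriality of the Artin symbol with respect to the ideal norm and compatibility of $\iota$ with the norm of local units.

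Now write $(x_\infty) = \No_{k_n/k}({\mathfrak t})$ with ${\mathfrak t} \in I_{k_n}\otimes\Z_p$. By functoriality, $\big(\tfrac{H_{k_n}^\pr/k_n}{{\mathfrak t}}\big)$ restricted to $H_k^\pr$ equals $\big(\tfrac{H_k^\pr/k}{\No_{k_n/k}({\mathfrak t})}\big) = \big(\tfrac{H_k^\pr/k}{(x_\infty)}\big)$, which is trivial since $(x_\infty) \in {\mathcal P}_{k,\infty}$ is in the kernel of the Artin map over $k$. Therefore $\big(\tfrac{H_{k_n}^\pr/k_n}{{\mathfrak t}}\big) \in \Gal(H_{k_n}^\pr/H_k^\pr) = {\mathcal T}_{k_n}^{1-\sigma_n}$, so it equals $B^{1-\sigma_n}$ for some $B \in {\mathcal T}_{k_n} \subseteq {\mathcal A}_{k_n}$. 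Using surjectivity of the Artin map over $k_n$, choose ${\mathfrak s} \in I_{k_n}\otimes\Z_p$ with $\big(\tfrac{H_{k_n}^\pr/k_n}{{\mathfrak s}}\big) = B$; then ${\mathfrak t}' := {\mathfrak t}\,{\mathfrak s}^{-(1-\sigma_n)}$ has trivial Artin symbol over $k_n$, hence lies in ${\mathcal P}_{k_n,\infty}$, i.e. ${\mathfrak t}' = (y_\infty)$ for some $y_\infty \in {\mathcal K}^\times_{n,\infty}$.

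To conclude, note that the arithmetic norm factors through $G_n$-coinvariants of ideals, so $\No_{k_n/k}({\mathfrak s}^{\sigma_n}) = \No_{k_n/k}({\mathfrak s})$ and hence $\No_{k_n/k}({\mathfrak s}^{1-\sigma_n}) = (1)$; thus $\No_{k_n/k}({\mathfrak t}') = \No_{k_n/k}({\mathfrak t}) = (x_\infty)$. This means $(\No_{k_n/k}(y_\infty)) = (x_\infty)$ in $I_k\otimes\Z_p$. Since $\iota$ commutes with $\No_{k_n/k}$ and $\iota_{k_n}y_\infty = 1$, we have $\iota_k(\No_{k_n/k}(y_\infty)) = 1$, so $\No_{k_n/k}(y_\infty) \in {\mathcal K}^\times_\infty$; by the injectivity of ${\mathcal K}^\times_\infty \hookrightarrow I_k\otimes\Z_p$ from Step 1, we get $x_\infty = \No_{k_n/k}(y_\infty) \in \No_{k_n/k}({\mathcal K}^\times_{n,\infty})$, as desired.

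The only delicate point — the part that genuinely uses arithmetic rather than formal manipulation — is the first step: passing back and forth between an infinitesimal element and the ideal it generates without losing information, which is exactly what upgrades ``$(x_\infty)$ is an ideal norm'' to ``$x_\infty$ is a norm of an infinitesimal element''. This is where Leopoldt's conjecture (no nontrivial infinitesimal unit, $k$ being totally real) enters. Everything else is standard functoriality of Artin symbols together with the already-recorded computations $\No_{k_n/k}({\mathcal T}_{k_n}) = {\mathcal T}_k$, ${\mathcal T}_{k_n}^{G_n}\simeq{\mathcal T}_k$, and $\Ker(\No_{k_n/k}|_{{\mathcal A}_{k_n}}) = {\mathcal T}_{k_n}^{1-\sigma_n} = \Gal(H_{k_n}^\pr/H_k^\pr)$.
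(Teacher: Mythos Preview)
Your argument is correct, but it is not the paper's argument. The paper proceeds on the ``element side'' rather than through Artin symbols: since $(x_\infty)$ is an ideal norm, $x_\infty$ is a local norm at every tame place, and $\iota x_\infty = 1$ makes it a local norm at every ${\mathfrak p}\in S$ as well; Hasse's norm theorem then gives $x_\infty = \No_{k_n/k}(y)$ directly. From $\No_{k_n/k}(\iota_n y)=1$ one applies Hilbert~90 in $\prod_{{\mathfrak p}_n\in S_n} k^\times_{n,{\mathfrak p}_n}$ to write $\iota_n y = t^{1-\sigma_n}$, and then the surjectivity of $\wh\iota_n : k_n^\times\otimes\Z_p \to \prod_{{\mathfrak p}_n}\wh k^\times_{n,{\mathfrak p}_n}$ (with kernel ${\mathcal K}^\times_{n,\infty}$) lets one lift $t$ to $z$ and conclude $y = z^{1-\sigma_n}\, y_\infty$, whence $x_\infty = \No_{k_n/k}(y_\infty)$.

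The trade-off is this. The paper's route is a short local--global computation (Hasse $+$ Hilbert~90 $+$ the exact sequence for $\wh\iota_n$), and it never needs to invoke Leopoldt in this particular step; the equality $x_\infty=\No_{k_n/k}(y_\infty)$ is obtained as an identity of elements, not merely of ideals. Your route is a clean diagram chase on the Galois side, exploiting the identification $\Gal(H_{k_n}^\pr/H_k^\pr)={\mathcal T}_{k_n}^{1-\sigma_n}$ recorded just before the lemma; it is arguably more structural, but it buys the last step (from $(\No y_\infty)=(x_\infty)$ to $\No y_\infty = x_\infty$) at the price of an explicit appeal to Leopoldt's conjecture (triviality of infinitesimal units). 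Since the whole paper works under Leopoldt anyway, this is a matter of taste rather than strength.
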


\begin{proof} 
The assumption implies that $x_\infty$ is everywhere local 
norm in $k_n/k$, whence $x_\infty = \No_{k_n/k}(y)$, 
$y \in k_n'^\times \otimes \Z_p$ (Hasse norm theorem). 
Thus, we get $\iota \No_{k_n/k}(y) = 
\No_{k_n/k}(\iota_n y) = 1$ and $\iota_n y = t^{1-\sigma_n}$, 
$t \in \prod_{{\mathfrak p}_n\in S_n}  k^\times_{n,{\mathfrak p}_n}$
(Hilbert's Theorem $90$, $\Hom^1(G_n,\prod_{{\mathfrak p}_n\in S_n} 
\!\! k^\times_{n,{\mathfrak p}_n})=1$). 
Consider $t$ in the profinite completion 
$\prod_{{\mathfrak p}_n\in S_n} \wh k^\times_{n,{\mathfrak p}_n}$; then one
has the exact sequence \cite[Chap. 1, \S\,a)]{J0}:
$$1 \to {\mathcal K}^\times_{n,\infty} \tooo 
k_n^\times \otimes \Z_p  \mathop{\tooo}^{\wh \iota_n} \hbox{$\prod_{{\mathfrak p}_n\in S_n}$} 
\wh k^\times_{n,{\mathfrak p}_n} \simeq \Z_p^{\order S} \oplus U_k \to 1. $$
Put $t=\wh \iota_n z$, $z \in k_n^\times \otimes \Z_p$; then 
$\wh \iota_n y = \wh \iota_n (z^{1-\sigma_n})$, 
$y = z^{1-\sigma_n} \, y_\infty$, $y_\infty \in {\mathcal K}^\times_{n,\infty}$, 
then $x_\infty = \No_{k_n/k}(y_\infty)$.
We also have $\Hom^1(G_n, {\mathcal K}^\times_{n,\infty})=1$ \cite[Lemme 5]{J0}.
\end{proof}

The fundamental link between ideal norms in ${k_n}/k$ and the torsion 
group ${\mathcal T}_k$ is given, for $n$ large enough, by the following result 
where the ``uniqueness'' are relative to the choices of the $F_n$; 
we say that some numbers $a \in k'^\times \otimes \Z_p$ (depending on $n$) 
are ``close to $1$'' if $\iota a \to 1$ in $U_k$ when $n \to \infty$.

\begin{theorem} \label{t}
Let $n \gg 0$ fixed and let ${\mathfrak A} \in I_{k_n} \otimes \Z_p$ 
(prime-to-$p$ ideal of $k_n$).

\smallskip
\noindent
\quad (i) There exists $\alpha \in k_n'^\times \otimes \Z_p$ such that
$\No_{k_n/k}({\mathfrak A} \,(\alpha)) = 
\No_{k_n/k}({\mathfrak T}) =: {\mathfrak t}$, with
$\big(\frac{H_{k_n}^\pr/k_n}{{\mathfrak T}} \big) \in {\mathcal T}_{k_n}$,
$\big(\frac{H_k^\pr/k}{{\mathfrak t}} \big) \in {\mathcal T}_k$
and $\iota \No_{k_n/k}(\alpha)$ close to $1$.

\smallskip
\noindent
\quad (ii) The representative ${\mathfrak t}$ of the class 
$\No_{k_n/k}(\mathfrak A) \cdot \No_{k_n/k}(k_n'^\times \otimes \Z_p)$, 
does not depend, modulo $\No_{k_n/k}({\mathcal P}_{k_n,\infty}$),
on the tower $\bigcup_j \!F_j$. 
\end{theorem}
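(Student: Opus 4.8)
The plan is to prove Theorem \ref{t} in two stages, the first reducing ideal norms to classes in ${\mathcal T}_k$ via the Artin symbol, and the second pinning down the well-definedness of the representative.

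\medskip
\textbf{Step 1: Existence of ${\mathfrak t}$ (part (i)).} First I would push ${\mathfrak A}$ into ${\mathcal T}_{k_n}$ at the level of $k_n$. By class field theory the Artin symbol $\big(\frac{H_{k_n}^\pr/k_n}{\cdot}\big)$ on $I_{k_n} \otimes \Z_p$ surjects onto ${\mathcal A}_{k_n}$ with kernel ${\mathcal P}_{k_n,\infty}$. Decompose ${\mathcal A}_{k_n}$: modulo ${\mathcal T}_{k_n}$ it is $\Z_p$-free of rank the number of independent $\Z_p$-extensions, and the relevant piece is the image of $\Gamma_n = \Gal(H_{k_n}^\pr/F_n)$, which maps isomorphically onto this free quotient because $H_{k_n}^\pr$ is the direct compositum of $F_n$ and $H_k^\pr$ over $k_nF$. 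Writing $\big(\frac{H_{k_n}^\pr/k_n}{{\mathfrak A}}\big) = \tau \cdot \gamma$ with $\tau \in {\mathcal T}_{k_n}$ and $\gamma \in \Gamma_n$, the point is that the $\Gamma_n$-component $\gamma$, when restricted down the cyclotomic tower, becomes highly divisible: for $n \gg 0$ it lies in $\Gamma^{p^n}$ inside $\Gamma$, and its arithmetic norm to $k$ is a $p^n$-th power in a procyclic group. Hence there is an ideal ${\mathfrak T} \in I_{k_n} \otimes \Z_p$ with $\big(\frac{H_{k_n}^\pr/k_n}{{\mathfrak T}}\big) \in {\mathcal T}_{k_n}$ and ${\mathfrak A}\,{\mathfrak T}^{-1} \in {\mathcal P}_{k_n,\infty} \cdot (\text{something whose norm is small})$; write this as ${\mathfrak A}\,(\alpha) = {\mathfrak T} \cdot (x_\infty)$ with $\alpha \in k_n'^\times \otimes \Z_p$ and $x_\infty$ infinitesimal, the ``smallness'' of $\alpha$ being exactly the statement that $\iota \No_{k_n/k}(\alpha) \to 1$ because we are absorbing a $p^n$-divisible contribution. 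Applying $\No_{k_n/k}$ and using $\No_{k_n/k}({\mathcal T}_{k_n}) = {\mathcal T}_k$ (from the fixed-point formula cited before the statement), the symbol of $\No_{k_n/k}({\mathfrak T}) =: {\mathfrak t}$ lands in ${\mathcal T}_k$ and $\No_{k_n/k}({\mathfrak A}\,(\alpha)) = {\mathfrak t} \cdot \No_{k_n/k}((x_\infty))$; absorbing $\No_{k_n/k}(x_\infty)$ into $\alpha$ (it is itself an infinitesimal element, so $\No_{k_n/k}((x_\infty)) \in {\mathcal P}_{k,\infty}$) gives the asserted equality.

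\medskip
\textbf{Step 2: Well-definedness (part (ii)).} Here I would show that two choices of the tower $\bigcup_j F_j$ produce representatives ${\mathfrak t}, {\mathfrak t}'$ for $\No_{k_n/k}({\mathfrak A}) \cdot \No_{k_n/k}(k_n'^\times \otimes \Z_p)$ that differ by an element of $\No_{k_n/k}({\mathcal P}_{k_n,\infty})$. Changing $F_n$ to $F_n'$ changes $\Gamma_n$ to a conjugate/twisted complement of ${\mathcal T}_{k_n}$ in ${\mathcal A}_{k_n}$; the difference of the two lifts ${\mathfrak T}, {\mathfrak T}'$ has Artin symbol in ${\mathcal T}_{k_n}$ times a $\Gamma_n$-correction, but after applying $\No_{k_n/k}$ and using that ${\mathcal T}_{k_n}^{1-\sigma_n} = \Ker(\No_{k_n/k})$ on ${\mathcal T}_{k_n}$, the ${\mathcal T}$-part is killed modulo what we already quotient by, and the $\Gamma_n$-correction — being an infinitesimal-ideal ambiguity from the choice of complement — contributes only to $\No_{k_n/k}({\mathcal P}_{k_n,\infty})$. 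The key ingredient is Lemma \ref{Ninfty}: any infinitesimal principal ideal of $k$ that is an ideal norm from $k_n$ comes from $\No_{k_n/k}$ of an infinitesimal element of $k_n$, which is precisely what lets us identify the ambiguity group as $\No_{k_n/k}({\mathcal P}_{k_n,\infty})$ rather than a larger group of principal ideals.

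\medskip
\textbf{Main obstacle.} The delicate point is Step 1: making precise the claim that the $\Gamma_n$-component of ${\mathfrak A}$'s symbol becomes $p^n$-divisible for $n \gg 0$ in a way that lets one write the discrepancy as $(\alpha)$ with $\iota\No_{k_n/k}(\alpha)$ genuinely tending to $1$ in $U_k$. This requires keeping track of how $\Gamma_n = \Gamma^{p^n}$ sits inside $\Gamma \simeq \Z_p$ compatibly with the arithmetic norm, and checking that the ``error'' ideal can be taken principal — i.e.\ that the relevant class in ${\mathcal C}_{k_n}$ (or its norm) is absorbed — which is where the hypothesis $n \gg 0$ and the total ramification of $p$ in $k_\infty/k$ are essential. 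The rest is bookkeeping with the exact sequences already established (the two displayed exact sequences of the Proposition in \S\ref{notations}, the infinitesimal-ideal description of $\Ker$ of the Artin map, and Lemma \ref{Ninfty}).
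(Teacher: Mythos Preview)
Your Step 1 captures the right mechanism—decompose the Artin symbol of ${\mathfrak A}$ along ${\mathcal A}_{k_n} = {\mathcal T}_{k_n} \times \Gamma_n$ and exploit that the $\Gamma_n$-part becomes $p^n$-divisible after norming—but your attempt to write ${\mathfrak A}\,(\alpha) = {\mathfrak T}\cdot(x_\infty)$ \emph{at the level of $k_n$} does not quite go through: the $\Gamma_n$-component ${\mathfrak C}$ of ${\mathfrak A}$ need not be principal in $k_n$, since the chosen $F_n$ is only required to contain $F$, not $H_{k_n}^\nr$. The paper sidesteps this by first applying $\No_{k_n/k}$: one has $\No_{k_n/k}({\mathfrak C}) = {\mathfrak c}^{p^n}\cdot(x_\infty)$ with $\big(\frac{H_k^\pr/k}{{\mathfrak c}}\big)\in\Gamma$, and now $H_k^\nr\subseteq F$ \emph{by construction of $F$}, so ${\mathfrak c}=(c)$ is principal in $k$. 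One then sets $\alpha = c^{-1}\,y'^{-1}_\infty$ (with $y'_\infty$ produced by Lemma \ref{Ninfty}), so that $\iota\No_{k_n/k}(\alpha) = \iota(c^{-p^n})$ is close to $1$. Your ``main obstacle'' paragraph circles this point without landing on the resolution, which is simply the choice $F\supseteq H_k^\nr$ made at the outset.

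Your Step 2 has a genuine gap. The structural argument you sketch—comparing complements $\Gamma_n,\Gamma'_n$ and invoking $\Ker(\No_{k_n/k}) = {\mathcal T}_{k_n}^{1-\sigma_n}$—cannot conclude: both ${\mathfrak t}$ and ${\mathfrak t}'$ already have Artin symbol in ${\mathcal T}_k$, so knowing the kernel of the norm on ${\mathcal T}_{k_n}$ says nothing about why their \emph{ratio} should have trivial symbol rather than merely a symbol in ${\mathcal T}_k$. The paper's proof of (ii) is of a different nature and crucially uses the ``close to $1$'' conclusion of (i). Writing ${\mathfrak t}'\,{\mathfrak t}^{-1}=(a)$ with $\iota a$ close to $1$ (as a quotient of two norms close to $1$), one raises to the exponent $p^e$ of ${\mathcal T}_k$ to get $(a)^{p^e}\in{\mathcal P}_{k,\infty}$, hence $a^{p^e}=\varepsilon\, a_\infty$ with $\varepsilon\in E_k\otimes\Z_p$ and $\iota\varepsilon$ close to $1$; then \emph{Leopoldt's conjecture} is invoked (for $n\gg 0$) to extract a $p^e$-th root $\eta\in E_k\otimes\Z_p$ of $\varepsilon$, giving $\iota(a\eta^{-1})\in W_k$ and hence $=1$ by closeness, so $a\eta^{-1}$ is infinitesimal and ${\mathfrak t}'\,{\mathfrak t}^{-1}\in{\mathcal P}_{k,\infty}$; Lemma \ref{Ninfty} then upgrades this to $\No_{k_n/k}({\mathcal P}_{k_n,\infty})$. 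Without this ``close to $1$ plus Leopoldt'' step, the uniqueness modulo $\No_{k_n/k}({\mathcal P}_{k_n,\infty})$ does not follow.
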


\begin{proof}
(i) From Diagram \ref{diagram4} and the properties of Artin symbols, there exist unique ideals 
${\mathfrak T}, {\mathfrak C} \in I_{k_n} \otimes \Z_p$, modulo ${\mathcal P}_{k_n,\infty}$,
such that:
\begin{equation}\label{AC}
{\mathfrak A} =  {\mathfrak T} \! \cdot\! {\mathfrak C} \! \cdot\!  (y_\infty), \hbox{ with }
\Big(\hbox{$\frac{H_{k_n}^\pr/k_n}{{\mathfrak T}}$} \Big) \in {\mathcal T}_{k_n},
\, \Big(\hbox{$\frac{H_{k_n}^\pr/k_n}{{\mathfrak C}}$} \Big) \in \Gamma_n,
\, y_\infty \in {\mathcal K}^\times_{n,\infty}
\end{equation}

 By restriction, the image of $\Gamma_n$ in $\Gamma$
is $\Gamma^{\,p^n}$; thus
$\No_{k_n/k}({\mathfrak C}) = {\mathfrak c}^{p^n}\cdot(x_\infty)$ for
${\mathfrak c} \in I_k \otimes \Z_p$ such that 
$\big(\frac{H_k^\pr/k}{{\mathfrak c}} \big) \in \Gamma$ and
$x_\infty \in {\mathcal K}^\times_{\infty}$; 
but since $H_k^\nr \subseteq F$, the ideal ${\mathfrak c}$ is $p$-principal,
thus ${\mathfrak c} =(c)$, $c \in k'^\times \otimes \Z_p$, and
then, $\No_{k_n/k}({\mathfrak C}) = (c^{p^n}) \!\cdot (x_\infty)$.
 We have from \eqref{AC}:
$$\No_{k_n/k}({\mathfrak A}) = 
\No_{k_n/k}({\mathfrak T}) \cdot (c^{p^n}) \cdot (x_\infty) \cdot \No_{k_n/k}(y_\infty)
=: {\mathfrak t}  \cdot (c^{p^n}) \cdot (x'_\infty), $$
with $\big(\frac{H_k^\pr/k}{{\mathfrak t}} \big) \in {\mathcal T}_k$,
$x'_\infty \in {\mathcal K}^\times_\infty$. 
From Lemma \ref{Ninfty}, since $(x'_\infty)$ is norm of ideal in $k_n/k$,
$x'_\infty = \No_{k_n/k}(y'_\infty)$, whence
$\No_{k_n/k} \big({\mathfrak A} \, (c)^{-1}\, (y'_\infty)^{-1}\big) = {\mathfrak t}$. 

\smallskip
Let $\alpha= c^{-1}\, y'^{-1}_\infty$; then 
$\iota \No_{k_n/k} (\alpha) = \iota (c^{-p^n})$ is close to $1$.

\smallskip
\noindent
\quad (ii) Let $\bigcup_j F'_j$ be another tower for Diagram \ref{diagram4}; with 
obvious notations (which depend on $n$),
put $u := \No_{k_n/k}(\alpha)$, $u' := \No_{k_n/k}(\alpha')$,
$\iota u$, $\iota u'$ close to $1$, we get
$\No_{k_n/k}({\mathfrak A}) \! \cdot\!  (u) = {\mathfrak t}$,
$\No_{k_n/k}({\mathfrak A}) \! \cdot\!  (u') =  {\mathfrak t}'$. 
Whence ${\mathfrak t}' \, {\mathfrak t}^{-1} = (a)$, with $a$
close to $1$. 
So, if $p^e$ is the exponent of ${\mathcal T}_k$, we obtain
$(a)^{p^e} = (a_\infty) \in {\mathcal P}_{k,\infty}$, which gives
$a^{p^e} = \varepsilon \, a_\infty$, $\varepsilon \in E_k \otimes \Z_p$ with
$\iota \varepsilon$ close to~$1$, hence (for $n \gg 0$) of the form $\varepsilon = \eta^{p^e}$, 
$\eta \in E_k \otimes \Z_p$, with $\iota \eta$ close to $1$ (from Leopoldt's conjecture 
\cite[Theorem III.3.6.2\,(iv)]{Gra1}). 
This yields $(a\,\eta^{-1})^{p^e} = a_\infty$ and we get 
$\iota(a\,\eta^{-1}) = \xi \in W_k = \tor_{\Z_p} (U_k)$; both $\iota a$ and 
$\iota \eta$ are close to $1$, thus $\xi=1$ and $a\,\eta^{-1}=a'_\infty$ 
giving ${\mathfrak t}' \, {\mathfrak t}^{-1} = (a'_\infty) \in \No_{k_n/k}({\mathcal P}_{k_n,\infty})$,
using Lemma \ref{Ninfty}
\end{proof}

\subsection{Images in ${\mathcal C}_k$ and ${\mathcal R}_k$ 
of the ideals ${\mathfrak t}$ -- Conjecture}

We choose, once for all, a set 
$\tk = \big\{\, {\mathfrak t}_\ell \, \big \}_{1 \leq \ell  \leq \order {\mathcal T}_k}$, 
of ideals ${\mathfrak t}_\ell \in I_k \otimes \Z_p$ whose 
Artin symbols describe ${\mathcal T}_k$ isomorphic to 
$\tk \!\cdot\! {\mathcal P}_{k,\infty} / {\mathcal P}_{k,\infty}$.

\smallskip
The ideals $\No_{k_n/k}({\mathfrak A}\,(\alpha)) = {\mathfrak t} \in \tk$,
well-defined modulo $\No_{k_n/k}({\mathcal P}_{k_n,\infty})$,
play the following roles in the evolution of the class and norm factors:

\smallskip
(i) {\it Class factors and $\tk$}.
The ideal groups $\No_{k_n/k}(J_n^i)$, representing the class groups
$\No_{k_n/k}(M_n^i)$ as denominator of the class factors, 
are generated, modulo principal ideals $(a)$, 
$a \in \No_{k_n/k}(k_n'^\times \otimes \Z_p)$, by ideals ${\mathfrak t}^i \in \tk$.

\smallskip
(ii) {\it Norm factors and $\tk$}.
The groups $\Lambda_n^i\! =\! \{x \in k^\times, (x) \in \No_{k_n/k}(J_n^i) \}$,
giving the norm factors, are obtained, modulo elements of
$\No_{k_n/k}(k_n'^\times \otimes \Z_p)$, via principal ideals 
$(\tau) \in \tk$ (hence $\tau$ is local norm at the tame places 
in $k_n/k$ and its norm properties only depend on $S$). 

\smallskip
Put $\tk^\ppl := \{{\mathfrak t} \in  \tk,\  {\mathfrak t} = (\tau)\}$;
the subgroup $\tk^\ppl \cdot {\mathcal P}_{k,\infty}/{\mathcal P}_{k,\infty}$,
is isomorphic to $\Gal(H_k^\pr/ k_\infty H_k^\nr)$.
Let ${\mathfrak t} = (\tau) \in \tk^\ppl$; so we have
$\big(\frac{H_k^\pr/k}{{\mathfrak t}} \big) \in
\Gal(H_k^\pr/k_\infty H_k^\nr) \simeq U_k^*/ \ov E_k$. This yields
$\tau^{p^e} = \varepsilon \, x_\infty$, $\varepsilon \in  E_k \otimes \Z_p$; 
whence $\iota \No_{k/\Q}(\tau) = \pm 1$ and
the image of $\iota \tau$ modulo $\ov E_k$ is defined in $U_k^*/\ov E_k$.
We consider the image of $\log(\iota \tau)$ in 
$\log(U_k^*)/\log(\ov  E_k) = {\mathcal R}_k$,
which defines $\log({\mathfrak t}) := \log(\iota \tau) \pmod{\log(\ov  E_k)}$.
We have $W_k = \Ker(\log)$, and this gives again the exact sequence \eqref{mu}.

\begin{remark}\label{locnorm}
Let $(\tau) \in \tk^\ppl$; choosing a representative of $\tau$ modulo 
${\mathcal K}_\infty^\times$ one may always assume that 
$(\tau) = \No_{k_n/k}({\mathfrak T})$, ${\mathfrak T} \in I_{k_n} \otimes \Z_p$,
since $\No({\mathcal T}_{k_n})={\mathcal T}_k$
(whence $\tau$ local norm at the tame places).
Suppose that the image of $\iota \tau$ in $\tor_{\Z_p}(U_k/\ov E_k)$ is 
in the subgroup $\Gal(H_k^\pr/H_k^\gen)$ generated by the inertia groups
$\tor_{\Z_p}(U_{\mathfrak p} \ov E_k/\ov E_k)$, ${{\mathfrak p} \in S}$; then 
$\tau$ is local norm on $S$. Indeed, let $\iota \tau = u = (u_{\mathfrak p},1,\ldots,1)$;
$u$ is local norm at each ${\mathfrak p}' \ne {\mathfrak p}$, whence 
a global norm (product formula). This explains that generators $\tau$ of 
ideals ${\mathfrak t} \in  \tk^\ppl$, whose images are in 
$\Gal(H_k^\pr/H_k^\gen)$, do not modify any norm factor, only depending 
on the image in ${\mathcal R}_k^\nr = \Gal(H_k^\gen/k_\infty H_k^\nr)$.
\end{remark}

\subsection{The algorithm in terms of fundamental ideals ${\mathfrak t}$.}
We still assume Conventions \ref{conventions} to the base field $k$.
In this subsection, we consider the layer $K=k_n$ 
(with $p^n \gg p^e$, the exponent of~${\mathcal T}_k$)
and, to simplify, we delete indices $n$  
(e.g., $M_n^i \to M^i$, $\Lambda_n^i \to \Lambda^i$, 
$\No_{k_n/k} \to \No$, $b_n \to b$ (number of steps in $K$)); 
then uppercase (respectively lowercase) letters
for ideals are reserved to $K$ (respectively $k$).

\smallskip
From Theorem \ref{t}\,(i) and for any prime-to-$p$ ideals
${\mathfrak A} \in J^i$, defining $M^i$, there exist
$\alpha \in K'^\times \otimes \Z_p$ and ${\mathfrak T}$ of finite 
order modulo ${\mathcal P}_{K,\infty}$, such that
$\No({\mathfrak A} \,(\alpha)) = \No({\mathfrak T}) =: {\mathfrak t} \in \tk$
with $\No(\alpha)$ close to $1$.
Denote by $\TK^i$, the set of such representatives ${\mathfrak T}^i$ 
and let $\Tk^i$ be the set of ${\mathfrak t}^i := \No({\mathfrak T}^i )$; so:
\begin{equation}\label{Sigma}
 \No(\TK^i) =  \Tk^i , \ \ 
\No(M^i)=  \cl_k \langle \Tk^i \rangle, \ \,  \Tk^i \subseteq \tk .
\end{equation} 

Replacing ${\mathfrak A}$ by ${\mathfrak A}\,(\alpha)$ does not modify the 
class and norm factors \eqref{cr} since $\cl_k(\No({\mathfrak A}))
= \cl_k({\mathfrak t})$ and, if 
$\No({\mathfrak A})$ is principal, then ${\mathfrak t} = (\tau)$ is 
equal to $\No({\mathfrak A})$ up to $\No(K^\times \!\otimes \Z_p)$,
which does not modify the norm properties in $\Lambda^i$. 
Then, in $\Lambda^i = \{\tau \in k'^\times \otimes \Z_p, \ (\tau) \in \langle \Tk^i \rangle\}$, 
one must find all elements $\tau^i$ (by definition of the form 
$\No({\mathfrak T}^i)$, ${\mathfrak T}^i \in \langle \TK^i \rangle$),
such that $\tau^i$ is local norm on $S$ in $K/k$, thus of the form
$\No (y^i)$, $y^i \in K'^\times \otimes \Z_p$; so the algorithm continues, from
$\No (y^i) = \No({\mathfrak T}^i)$, with the following evolution using Theorem \ref{t}\,(i)):
\begin{equation}\label{evolution}
(y^i) = {\mathfrak T}^i \cdot  {\mathfrak B}^{1-\sigma}, \ 
\hbox{with $\No( {\mathfrak B} \,(\beta))= \No({\mathfrak T}')
={\mathfrak t}'  \in \tk$,} 
\end{equation}
for a suitable $\beta$ such that $\No(\beta)$ is close to $1$,
and one obtains a new ${\mathfrak t}'$ to build $\Tk^{i+1}$, and so on.
If $\lambda$ or $\mu$ do not vanish, there exist, when $[K : k] \to\infty$, 
arbitrary large $i$-sequences of sets $\Tk^i$ such that the class and norm factors 
are constant, which seems incredible, each new ${\mathfrak t}'$ being a priori 
random in~$\tk$. 

A philosophy should be that it is the ${\mathfrak t}'$ which govern (numerically) 
the $G$-structure of the class groups in $K/k$ and not the inverse (see also 
\cite[Remarques 11, \S\,6]{Gra4}). 

\smallskip
Let's give a more precise description of the numerical possibilities, assuming 
to simplify the comments that $1 \to {\mathcal R}_k \to {\mathcal T}_k \to {\mathcal C}_k \to 1$
is an exact sequence of $\F_p$-vector spaces; we compute
the filtration $\{M^i\}_{i \geq 0}$ for $M= {\mathcal C}_K$ ($K=k_n$ fixed)
with the following exact sequence at the step $i$ (see \eqref{Sigma}):
\begin{equation*}
1 \too \Lambda^i/E_k \mathop{\tooo}^{(\,.\,)}  \langle \Tk^i  \rangle 
\mathop{\tooo}^{\cl_k}  \cl_k \langle \Tk^i \rangle = \No(M^i) \too 1,
\end{equation*}
where $\Lambda^i = \{\tau \in k'^\times \otimes \Z_p,\ (\tau) \in \langle \Tk^i \rangle\}$,
and let ${\mathfrak t}^{i+1}$ (obtained as above). Various cases
may arrive to get the $(i+1)$th exact sequence
\begin{equation*}
1 \to \Lambda^{i+1}/E_k \mathop{\tooo}^{(\,.\,)} \langle \Tk^{i+1} \rangle 
\mathop{\tooo}^{\cl_k} \cl_k \langle \Tk^{i+1} \rangle  = \No(M^{i+1}) \to 1:
\end{equation*}

\noindent
\quad $\bf (a)$ $\cl_k({\mathfrak t}^{i+1}) \notin \cl_k \langle\Tk^i \rangle$.
Thus $\No(M^{i+1}) \supsetneq \No(M^i)$ and
this decreases the class factor; but there is no new relation of
principality between ideals, so $\Lambda^{i+1} = \Lambda^i$
(norm factor unchanged).

\smallskip
\noindent
\quad $\bf (b)$ $\cl_k({\mathfrak t}^{i+1}) \in \cl_k \langle \Tk^i \rangle$.
Thus $\No(M^{i+1}) = \No(M^i)$ (class factor unchanged);
but ${\mathfrak t}^{i+1} = (\tau) \cdot \prod_j {\mathfrak t}^i_j{}^{a_j}$
gives, possibly, some $\tau \notin \Lambda^i$. Then two cases arise: 

\smallskip
\noindent
\qquad $\bf (i)$ $\tau \notin \Lambda^i \, \No (K^\times)$, therefore
$(\Lambda^{i+1} : \Lambda^{i+1} \cap \No (K^\times)) >
(\Lambda^i : \Lambda^i \cap \No (K^\times))$,
which decreases the norm factor.

\smallskip
\noindent
\qquad $\bf (ii)$  $\tau \in \Lambda^i\, \No (K^\times)$ (class and norm factors 
unchanged). This is the ``bad case'' occurring, roughly, $O(\lambda n +\mu p^n)$ 
times if Greenberg's conjecture falls (see Remark \ref{locnorm} for more enlightenment). 

\smallskip
We have given, in  \cite[Section 6]{Gra4}, some heuristics about the ``equation
$(y) = {\mathfrak A} \, {\mathfrak B}^{1-\sigma}\,$'' in cyclic extensions $L/K$ when 
$\No_{L/K}(y) = \No_{L/K}({\mathfrak A})$ and its ``additive aspects'', which applies to
$(\tau) = \No( {\mathfrak T}) = \No(y)$ and $(y) = {\mathfrak T} \, {\mathfrak B}^{1-\sigma}$.

\smallskip
Assuming that the ideals ${\mathfrak t}$, given by the algorithm,
are random, $\cl_k({\mathfrak t})$ (resp. $\log(\iota \tau) \pmod {\log(\ov E_k)}$ 
are random in ${\mathcal C}_k$ (resp. ${\mathcal R}_k$). This is likely to avoid 
unbounded algorithms and suggests the following conjecture:

\begin{conjecture}\label{conj}
For $n \gg 0$ fixed, let ${\mathfrak t}_j$ (or
$\tau_j$, when ${\mathfrak t}_j = (\tau_j)$ is principal), be the fundamental 
ideals encountered by the algorithm computing inductively the successive 
class and norm factors, in $b_n$ steps; then:

\smallskip
\noindent
\quad (i) The classes $\cl_k({\mathfrak t}_j)$ are uniformly 
distributed in ${\mathcal C}_k$.

\smallskip
\noindent
\quad (ii) When ${\mathfrak t}_j = (\tau_j)$, the images
$\log({\mathfrak t}_j) := \log(\iota \tau_j) 
\pmod{\log(\ov  E_k)}$ are uniformly distributed in the normalized 
regulator ${\mathcal R}_k = \log(U_k^*)/\log(\ov  E_k)$.
\end{conjecture}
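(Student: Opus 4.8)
The plan is to treat Conjecture \ref{conj} not as a statement amenable to a direct algebraic proof — it is not — but as a prediction about the statistics of a stochastic process, and to outline the analytic program that would establish it. I would separate the two assertions, since (i) concerns classes in $\mathcal{C}_k$ and (ii) concerns the logarithmic images in $\mathcal{R}_k$, and by the exact sequence $1 \to \mathcal{R}_k \to \mathcal{T}_k \to \mathcal{C}_k \to 1$ the two together amount to equidistribution of the Artin symbols $\big(\frac{H_k^\pr/k}{{\mathfrak t}_j}\big)$ in the whole finite group $\mathcal{T}_k$. By Theorem \ref{t}, each ${\mathfrak t}_j$ equals $\No_{k_n/k}({\mathfrak T}_j)$ with $\big(\frac{H_{k_n}^\pr/k_n}{{\mathfrak T}_j}\big) \in \mathcal{T}_{k_n}$, and $\No_{k_n/k} : \mathcal{T}_{k_n} \twoheadrightarrow \mathcal{T}_k$; so the first reduction is to understand how the symbol of ${\mathfrak T}_j$ is distributed in $\mathcal{T}_{k_n}$ and then to push it down.

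First I would fix the probability space. The only genuinely free choices in the algorithm are the prime-to-$p$ ideals ${\mathfrak A} \in J_n^i$ representing $M_n^i$, and — crucially — the ideal ${\mathfrak B} \in I_{k_n}$ produced at step $i$ by the evolution relation $(y) = {\mathfrak T}^i\, {\mathfrak B}^{1-\sigma_n}$ once a global solution $y$ of $\No_{k_n/k}(y) = \No_{k_n/k}({\mathfrak T}^i)$ has been chosen (unique up to $(k_n^\times \otimes \Z_p)^{1-\sigma_n}$, hence up to principal ideals). I would model these by sampling the underlying primes à la Chebotarev inside $H_{k_n}^\pr/k_n$, so that $\big(\frac{H_{k_n}^\pr/k_n}{{\mathfrak B}}\big)$ — and hence, after projection, $\big(\frac{H_k^\pr/k}{{\mathfrak t}'}\big)$ with ${\mathfrak t}' = \No_{k_n/k}({\mathfrak B}\,(\beta))$ — becomes an honest random variable on $\mathcal{T}_k$.

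The second step is the single-step law. For the principal case (ii), as already noted after Theorem \ref{t}, the relevant data are the Hasse symbols $\big(\frac{\tau\,,\,k_n/k}{{\mathfrak p}}\big) \in G_n \cong \Z/p^n\Z$, ${\mathfrak p}\in S$, subject to the single product relation, i.e. a random element of the genus module $\Omega(k_n/k) \cong G_n^{\order S - 1}$; the solvability of the norm equation on $S$ and the resulting membership of $\tau$ in $\Lambda_n^i\,\No_{k_n/k}(k_n^\times)$ are read off this random vector/matrix over $\Z/p^n\Z$, whose cokernel and moment statistics are exactly what \cite[Section 6]{Sm} and \cite{KP} compute; importing those gives equidistribution of $\log(\iota\tau)$ in $\mathcal{R}_k = \log(U_k^*)/\log(\ov E_k)$. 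In the non-principal case $\cl_k({\mathfrak t})$ equidistributes in $\mathcal{C}_k$ by Chebotarev for $k_\infty H_k^\nr/k$, and via \eqref{mu} — together with Remark \ref{locnorm}, which shows that the subgroup $\Gal(H_k^\pr/H_k^\gen)$ leaves every norm factor unchanged, so only the $\mathcal{R}_k^\nr$-part enters there — one gets equidistribution of the symbol in all of $\mathcal{T}_k$ for one step.

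The hard part will be that the successive steps are \emph{not} independent: the set $\Tk^i$ and the group $\Lambda_n^i$ accumulated through steps $0,\dots,i-1$ feed into the norm equation solved at step $i$, so the conditional law of ${\mathfrak t}^{i+1}$ given the history need not be uniform — this is precisely the ``independence and randomness'' content of Conjecture \ref{conj}, and it cannot be extracted from the class-field-theoretic formalism alone. The realistic targets are therefore: (a) prove the one-step equidistribution unconditionally (Chebotarev plus the matrix statistics of \cite{Sm,KP}); (b) prove a ``no-conspiracy/mixing'' estimate showing the history-dependence is asymptotically negligible as $p^n \to \infty$ — plausibly on \emph{average} over a family of base fields $k$, in the spirit of the density-one theorems of \cite{Sm,KP}, rather than for a single $k$; and (c) conclude via Theorem \ref{O} that $b_n$ stays bounded, hence $\lambda = \mu = 0$, ``with probability $1$''. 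Step (b) is the genuine obstacle and is the reason why — as the author already warns — this route is expected to fall short of an unconditional proof of Greenberg's conjecture, even though it would render the conjecture true in a precise statistical sense.
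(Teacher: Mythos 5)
The statement you were asked to address is labeled \emph{Conjecture}~\ref{conj}, and the paper offers no proof of it: the surrounding text motivates it heuristically (the ideals ${\mathfrak t}$ produced by the algorithm look random, so their classes and logarithms ``should'' equidistribute in ${\mathcal C}_k$ and ${\mathcal R}_k$) and then explores its \emph{consequences} (bounded $b_n$, hence $\lambda=\mu=0$ via Theorem~\ref{O} and Corollary~\ref{bn=1}). So there is nothing in the paper for your argument to match or diverge from; the honest answer, which you in fact give, is that no algebraic proof is available.

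Your sketch is a faithful and somewhat sharpened rendering of the paper's motivation rather than a proof. The reduction of (i) and (ii) jointly to equidistribution of Artin symbols in ${\mathcal T}_k$ via the exact sequence $1\to{\mathcal R}_k\to{\mathcal T}_k\to{\mathcal C}_k\to1$ is consistent with \eqref{mu}, Theorem~\ref{t} and Remark~\ref{locnorm}, and your appeal to Chebotarev in $H_{k_n}^\pr/k_n$ for the one-step law, plus the $\Z/p^n\Z$-matrix statistics of \cite{KP,Sm} for the Hasse-symbol vector in $\Omega(k_n/k)$, is exactly the direction the author points to in the concluding subsection. Two small cautions. First, the uniformity asked for is over a \emph{fixed} base field $k$ at a \emph{fixed} large layer $n$; a Chebotarev-type heuristic produces equidistribution only as one ranges over many auxiliary primes or over a family of fields, so even ``step (a)'' is not unconditional for a single $k$ as you suggest — one would at best get it on average over families, as \cite{KP,Sm} do. Second, and you flag this correctly, the ideals ${\mathfrak B}$ in the evolution relation \eqref{evolution} are constrained by the norm equation and by the accumulated $\Lambda_n^i$, so the successive ${\mathfrak t}^{i+1}$ are conditioned on the history; the ``independence'' in the conjecture is an extra hypothesis, not a consequence of class field theory, and this is precisely why the paper stops at a conjecture and why your step (b) is the real obstacle. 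Your proposal therefore does not close any gap — none can be closed here — but it does accurately locate where the difficulty lies, which is what the paper itself argues.
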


\subsection{Conclusion and possible methods}
Recall that $b_n$ is the length of the algorithm for the layer $n$.
We observe the huge discontinuity between the case $b_n$ bounded,
which characterizes Greenberg's conjecture (Theorem \ref{O} and 
Corollary \ref{bn=1}) and the case where 
$\lambda$ or $\mu$ are non-zero, giving $b_n \to \infty$. 
In other words, there is a conflict between the ``random aspect'' of the 
algorithm, when $\lambda$ or $\mu$ are non-zero, and the smooth algebraic 
form given by Iwasawa's theory.
We indeed have, under Conventions \ref{conventions}, 
$\order {\mathcal C}_{k_n} = p^{\lambda\, n + \mu \,p^n+ \nu}$ 
for all $n \geq 0$, so that the algorithm must obtain rigorously 
these formulas, {\it for all $n$}, which seems to be an excessive 
requirement in contradiction with Conjecture \ref{conj}.

\smallskip
To give a logical way, the sole ``solution'', where $b_n$ does not tend 
to infinity, is $b_n$ constant for all $n \geq n_1$, giving, from
the new base field $k_{n_1}$, that we still denote $k$, the well-known 
properties when Greenberg's conjecture holds.
In that case, ${\mathcal C}_{k_n}^2/{\mathcal C}_{k_n}^1=1$ and $b_n\leq 1$ for all $n$.
In other words, in this situation, the ``unpredictable'' evolution relation
\eqref{evolution} is not needed.
The quotient ${\mathcal C}_{k_n}^2/{\mathcal C}_{k_n}^1$ does appear (written instead 
$(1-\sigma) {\mathcal C}_{k_n} [(1-\sigma)]$) in works of Koymans--Pagano--Smith 
\cite{KP,Sm}, where deep distribution results are proved for the degree $p$ cyclic case.

\smallskip
We believe that these techniques can be successful 
for Greenberg's conjecture since the general algorithm 
of ``unscrewing'' in $k_n/k$ is identical and is essentially based 
on random values of classical norm symbols.
In other words, Greenberg's conjecture 
would be, for $k_\infty/k$ ($k$ taken hight enough in the cyclotomic tower),
an extreme version (of the degree $p$ cyclic case) giving the non-existence 
of ``exceptional $p$-classes'' (i.e., non-invariant $p$-classes) in the tower, 
that is to say, $b_n \leq 1$ for all $n \geq 0$ (to be compared with $b_n \to \infty$
if $\lambda$ or $\mu$ do not vanish).

\begin{remark}
For a base field which does not fulfill the previous conditions, 
the algorithms may need several steps and (under Greenberg's conjecture) they
regularize at some layer such that the above trivialization holds; for instance,
the case of $k=\Q(\sqrt{6559})$, $p=3$, computed in \cite[\S\,7.2]{Gra4}, 
yields ${\mathcal C}_k \simeq \Z/9\Z$, ${\mathcal R}_k \simeq \Z/27\Z$, 
${\mathcal C}_{k_1} \simeq \Z/27\Z \times \Z/3\Z$ (whence $b_1=2$) and
${\mathcal C}_{k_2} \simeq \Z/27\Z \times \Z/9\Z$; we compute with \cite{BJ} 
that $\wt {\mathcal C}_k \simeq \Z/3\Z$, $\wt {\mathcal C}_{k_1} \simeq 
\wt {\mathcal C}_{k_2} \simeq \Z/9\Z$.
\end{remark}

All this shows how classical arguments of {\it algebraic number theory} seem insufficient 
to prove unconditionally Greenberg's conjecture (among others), but that density 
results may be accessible, giving that the conjecture holds except, possibly, for 
pathological families of zero density (probably none).


\end{document}